\DeclarePairedDelimiter\floor{\lfloor}{\rfloor}
\begin{document}
\theoremstyle{plain}
\newtheorem{theorem}{Theorem}[]
\newtheorem*{theorem*}{Theorem}
\newtheorem{lemma}{Lemma}
\newtheorem{corollary}{Corollary}
\newtheorem{proposition}{Proposition}
\newtheorem{conjecture}{Conjecture}

\theoremstyle{definition}
\newtheorem{definition}{Definition}
\newtheorem{remark}{Remark}

\def\eps{\varepsilon}
\def\F{\mathbb {F}}
\def\C{\mathbb {C}}
\def\K{\mathbb {K}}
\def\A{{\mathcal A}}
\def \I{\mathcal{I}}

\author{Alexandr Grebennikov, Arsenii Sagdeev, \\ Aliaksei Semchankau, Aliaksei Vasilevskii}
\title{On the sequence $n! \bmod p$}

\maketitle

\begin{abstract}
\setlength\parindent{0pt}
We prove, that the sequence $1!, 2!, 3!, \dots$ produces at least $(\sqrt{2} + o(1))\sqrt{p}$ distinct residues modulo prime $p$.
Moreover, factorials on an interval $\I \subseteq \{0, 1, \dots, p - 1\}$ of length $N > p^{7/8 + \eps}$ produce at least $(1 + o(1))\sqrt{p}$ distinct residues modulo $p$.
As a corollary, we prove that every non-zero residue class can be expressed as a product of seven factorials $n_1! \dots n_7!$ modulo $p$, where  $n_i = O(p^{6/7+\varepsilon})$ for all $i=1,\dots,7$, which provides a polynomial improvement upon the preceding results.

MSC: 11N56, 11L03, 11B65
\end{abstract}

\tableofcontents

\section{Introduction}

Wilson's theorem represents one of the most elegant results in elementary number theory. It states that if $p$ is  a prime number, then $(p-1)! = -1 \bmod p$. As one of its simple corollaries, we note that $(p-2)! = 1! \bmod p$, and thus not all the residues from
$$\A(p) \coloneqq \{i! \bmod p: i \in [p-1]\}$$
are distinct. Erd\H{o}s conjectured \cite{RockSchin}, that this is not the only coincidence, i.e., that $|\A(p)| < p-2$. Surprisingly, despite the long history of this natural problem, Erd\H{o}s' conjecture remains widely open though verified \cite{Trud} for all primes  $p < 10^9$.

At the same time, it is widely believed (see \cite{BrougBar, CVZ} and \cite{guy}, \textbf{F11}) that the elements of $\A(p)$ may be considered as more or less `independent uniform random variables' for large $p$. In particular, it is conjectured that
\begin{equation*}
	|\A(p)| = \left(1-\frac{1}{e}+o(1)\right)p 
\end{equation*}
as $p \rightarrow \infty$. However, the best lower bound up to now is due to Garc\'ia~\cite{garcia2007}:
\begin{theorem*}[\bf Garc\'ia]
    $$|\A(p)| \geqslant\left( \sqrt{\frac{41}{24}}+o(1)\right) \sqrt{p}.$$
\end{theorem*}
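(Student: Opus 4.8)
The plan is to bound $N:=|\A(p)|$ from below using two structural features of the sequence $a_n := n!\bmod p$: the recurrence $a_{n+1}=(n+1)a_n$ and Wilson's theorem. The recurrence gives $a_{n+1}/a_n = n+1$ for $1\le n\le p-2$, so every residue in $\{2,\dots,p-1\}$ is a ratio of two elements of $\A(p)$; equivalently the ordered pairs $(a_{n+1},a_n)$ are $p-2$ \emph{distinct} points of $\A(p)\times\A(p)$, since such a pair determines the ratio $n+1$ and hence $n$. As a set of size $N$ contains at most $N(N-1)+1$ ratios, this already yields the soft bound $N\ge(1+o(1))\sqrt p$. The entire content of the theorem is the improvement of the constant from $1$ to $\sqrt{41/24}$, and for that the plain ratio count is provably insufficient: the ratio set lives in $\F_p^{\ast}$ and has at most $p-1$ elements, so this approach cannot give better than $N^2\ge p-1$.

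To gain a constant larger than $1$ I would feed in Wilson's theorem in the form $n!\,(p-1-n)!\equiv(-1)^{n+1}\pmod p$, i.e.\ $a_n^{-1}=(-1)^{n+1}a_{p-1-n}$. Splitting the sequence by the parity of the index, set $\A^{+}=\{a_n:n\text{ odd}\}$ and $\A^{-}=\{a_n:n\text{ even}\}$; the identity shows that $\A^{+}$ is closed under inversion, while $(\A^{-})^{-1}=-\A^{-}$. Combining this with $a_{n+1}/a_n=n+1$ and sorting $n$ by parity shows that the product set $\A^{+}\cdot\A^{-}$ contains every even residue, whence $|\A^{+}|\,|\A^{-}|\ge (p-1)/2$. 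If $\A^{+}$ and $\A^{-}$ were disjoint and of equal size this alone would force $N^2\ge 2(p-1)$; the point is that in general they overlap, and a bound on the overlap of the right strength is exactly what converts $|\A^{+}|\,|\A^{-}|\ge(p-1)/2$ into $N^2\ge(41/24+o(1))p$.

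The step I expect to be the main obstacle is therefore the control of the overlap $\A^{+}\cap\A^{-}$ — equivalently, of the multiplicative interaction between the even- and odd-indexed factorials, or of the number of coincidences $a_n=a_m$ with $n$ and $m$ of opposite parity. A crude treatment that permits full overlap degrades the bound all the way back to $\sqrt{p/2}$, so quantitative savings here are essential and delicate. I would attack it through a second-moment argument: express the covering of the even residues by $\A^{+}\cdot\A^{-}$ via a representation function, expand its square, and use the recurrence to isolate and estimate the diagonal and near-diagonal contributions, which is where the specific rational constant $41/24$ should emerge from an optimisation over short families of shifts $a_{n+j}/a_{n}$. The real difficulty is that bounding such coincidences is, a priori, about as hard as the original counting problem, so the argument must exploit the rigidity of products of consecutive integers rather than treat $\A(p)$ as an arbitrary subset of $\F_p$.
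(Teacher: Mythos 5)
Your proposal is a plan rather than a proof, and the decisive step is missing. Everything you actually establish is: (i) the soft bound $N \geqslant (1+o(1))\sqrt{p}$ from the ratio count, and (ii) $|\A^{+}|\,|\A^{-}| \geqslant (p-1)/2$ from the fact that $\A^{+}\cdot\A^{-}$ contains every even residue. As you yourself note, (ii) alone only gives $N \geqslant \sqrt{(p-1)/2}$, which is \emph{worse} than (i), and the overlap control that would upgrade it is exactly what you do not supply: the ``second-moment argument'' is described in one sentence, no diagonal estimate is proved, and the constant $41/24$ never emerges from any computation. Since the entire content of the theorem is the constant, the proof is not there.

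It is also worth comparing routes. Garc\'ia's argument (as the paper summarizes it) lower-bounds the full product set, $|\A(p)\A(p)| \geqslant (41/48+o(1))p$, by exhibiting residues with certain representability properties in $\A(p)\A(p)$, and then uses the \emph{unordered}-pair count $\binom{|\A(p)|+1}{2} \geqslant |\A(p)\A(p)|$, so that $|\A(p)|^2/2 \gtrsim \tfrac{41}{48}p$ gives $\sqrt{41/24}\,\sqrt{p}$. Two things in your write-up work against this. First, you dismiss ratio/product counting as incapable of beating the constant $1$ because the ambient group has only $p-1$ elements; but with unordered products the trivial inequality is $N^2/2 \geqslant |\A\A|$, so product counting can reach $\sqrt{2p}$ (this is precisely how the paper's own Corollary gets $(\sqrt{2}+o(1))\sqrt{p}$ from Theorem~\ref{thm:product}). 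Your own observation that $\A\A$ contains all even residues already recovers $(1+o(1))\sqrt{p}$ this way. Second, your asymmetric setup $\A^{+}\cdot\A^{-}$ forgoes that factor-of-two saving and replaces it with an overlap problem that, as you concede, is about as hard as the original question. The missing idea is therefore concrete: show that $\A(p)\A(p)$ covers substantially more than the $(p-1)/2$ even residues --- a positive proportion of the odd ones as well, enough to reach density $41/48$ --- and then invoke the unordered-pair bound. Without that input, the argument cannot produce any constant larger than $1$.
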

The strategy in \cite{garcia2007} was to prove that $\A(p)\A(p)$ contains residues with certain properties, which forces the estimate $|\A(p)\A(p)| \geqslant (41/48 + o(1))p$ to hold; combined with the observation 
$$
 {|\A(p)| + 1 \choose 2} \geqslant |\A(p)\A(p)|
$$ 
this yields the result. 
We improve it to the following:
\begin{theorem}\label{thm:product}
$$
    |\A(p)\A(p)| \geqslant p + O(p^{13/14}(\log{p})^{4/7}).
$$
\end{theorem}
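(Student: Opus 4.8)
The plan is to pass from the product set to its multiplicative energy. For $\lambda\in\F_p^\times$ set $N(\lambda) = \#\{(a,b)\in[p-1]^2 : a!\,b!\equiv\lambda \pmod p\}$, so that $\sum_\lambda N(\lambda) = (p-1)^2$ and, by Cauchy--Schwarz,
\[
|\A(p)\A(p)| \;\ge\; \frac{\big(\sum_\lambda N(\lambda)\big)^2}{\sum_\lambda N(\lambda)^2} \;=\; \frac{(p-1)^4}{J}, \qquad J \coloneqq \sum_\lambda N(\lambda)^2 = \#\{(a,b,c,d): a!\,b!\equiv c!\,d!\}.
\]
Expanding $N(\lambda)$ in multiplicative characters yields the clean identity $J = \tfrac{1}{p-1}\sum_{\chi}|S(\chi)|^4$, where $S(\chi) \coloneqq \sum_{n=1}^{p-1}\chi(n!)$. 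The principal character contributes the main term $(p-1)^3$ — exactly the value one gets from perfect equidistribution of the products $a!\,b!$ — so the whole problem is to bound the excess $J - (p-1)^3 = \tfrac{1}{p-1}\sum_{\chi\ne\chi_0}|S(\chi)|^4$. Concretely, once one proves $J \le (p-1)^3 + O\big(p^{41/14}(\log p)^{4/7}\big)$, the elementary estimate $(p-1)^4/J \ge (p-1) - \tfrac{J-(p-1)^3}{(p-1)^2}$ together with $41/14 - 2 = 13/14$ gives precisely the claimed $|\A(p)\A(p)| \ge p + O\big(p^{13/14}(\log p)^{4/7}\big)$. Thus the reduction and the final Cauchy--Schwarz step are routine, and everything rests on the energy bound.

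To control the excess energy I would not attempt a pointwise bound on $|S(\chi)|$ (the factorial sequence seems too erratic for clean square-root cancellation), but rather estimate $J$ directly as a count of coincidences between products of consecutive integers. Using $a!/c! = \prod_{c<i\le a} i$ and the reflection identity $n!\,(p-1-n)!\equiv(-1)^{n+1}$ to reduce to the case $a\ge c,\ d\ge b$, the equation $a!\,b!\equiv c!\,d!$ becomes $\prod_{c<i\le a} i \equiv \prod_{b<j\le d} j$. Introducing a length parameter $H$, I would split according to whether the shifts $a-c,\,d-b$ are at most $H$ or larger. For bounded shifts $h\le H$ the relevant object is the complete multiplicative sum $\sum_{x}\chi\big(\prod_{i=1}^{h}(x+i)\big)$, a squarefree polynomial of degree $h$, controlled by the Weil bound $\le (h-1)\sqrt p$; completing the nearly-full range of the base point costs only a factor $O(\log p)$. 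The long shifts are absorbed through equidistribution of long interval-products. Optimizing the cut-off $H$ balances a contribution increasing in $H$ against one decreasing in $H$, and the completion logarithms propagated through this optimization produce the exponent $13/14$ and the power $(\log p)^{4/7}$.

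The main obstacle is exactly this energy estimate. The danger is that a naive shift-by-shift use of Weil is far too lossy: each fixed pair of shift-lengths $(h,k)$ yields an error of order $p\sqrt{hk}$, and summed over all $\sim p^2$ pairs these swamp the main term $(p-1)^3$ by a wide margin. Genuine cancellation must therefore be extracted either across the base point after completion, or by exploiting the multiplicative nesting $P(c,a)\,P(a,b)=P(c,b)$ of interval-products, rather than treating each shift in isolation. Achieving a saving large enough to push the excess below $p^{41/14}$ — that is, below the main term by a factor $p^{1/14}$ — is the entire content of the theorem. Along the way one must also dispose of the degenerate contributions (the diagonal solutions $\{a,b\}=\{c,d\}$, the Wilson coincidences $n!\,(p-1-n)!\equiv\pm1$, and products that collapse to $1$), but these are easily seen to be of lower order.
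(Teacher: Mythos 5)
Your Cauchy--Schwarz reduction is arithmetically sound, but it leaves the theorem unproven: everything rests on the energy bound $J \le (p-1)^3 + O\big(p^{41/14}(\log p)^{4/7}\big)$, for which you give no proof --- you yourself concede that the shift-by-shift Weil estimate is ``far too lossy'' and that closing this gap ``is the entire content of the theorem.'' The deeper problem is that the bound you need is substantially \emph{stronger} than the theorem: it asserts that all $(p-1)^2$ products $a!\,b!$ are equidistributed in $L^2$ up to relative error $p^{-1/14}$. With the best known character-sum input (Garaev--Luca--Shparlinski give $\max_{\chi\ne\chi_0}\big|\sum_n\chi(n!)\big| \ll p^{7/8}(\log p)^{1/4}$), one only gets $\sum_{\chi\ne\chi_0}|S(\chi)|^4 \le \max_{\chi\neq\chi_0}|S(\chi)|^2\cdot\sum_\chi|S(\chi)|^2 \ll p^{7/4}\cdot(p-1)T$, where $T=\#\{(m,n):m!\equiv n!\}$ is itself only known to be $\ll p^{7/4}(\log p)^{1/2}$; the resulting excess exceeds the main term $(p-1)^3$ and the method returns nothing. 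No known technique controls the full multiplicative energy of the factorial sequence to the precision you require, and your sketch (completion in the base point, the nesting of interval-products) does not identify a concrete mechanism for the needed cancellation.

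The paper avoids this by never touching the full energy. Wilson's theorem gives $(y+j)!\,(p-1-y)! \equiv (y+j)!/y! = P_j(y)$ for odd $y$, so $\A(p)\A(p)$ contains the polynomial images $Y_j = P_j(\I)$, where $\I$ is the set of odd numbers up to roughly $2p^{13/14}$ and $j$ ranges over primes up to $p^{1/7}$. Absolute irreducibility of $P_k(x)-P_j(y)$ (via Fried--Turnwald and Schmidt), combined with Lang--Weil and a Fourier completion over the progression, yields $|Y_j| \ge (1+o(1))N$ and $|Y_k\cap Y_j| \le (1+o(1))N^2/p$; the second-moment union bound of Lemma \ref{lem:set_intersect}, applied to these $\gg p^{1/7}/\log p$ sets, then forces $\big|\bigcup_j Y_j\big| \ge p + O\big(p^{13/14}(\log p)^{4/7}\big)$. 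In effect the paper runs your energy argument \emph{restricted} to a small, algebraically structured family of representations, where the off-diagonal coincidence count is governed by point counts on absolutely irreducible curves rather than by cancellation in $\sum_n\chi(n!)$. That restriction is the missing idea in your proposal.
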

\begin{corollary}
    $$
    |\A(p)| \geqslant \big( \sqrt{2} + o(1)\big)\sqrt{p}.
    $$
\end{corollary}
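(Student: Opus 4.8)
The plan is to derive the corollary directly from Theorem~\ref{thm:product} by combining it with the elementary product-counting inequality already recorded in the excerpt for Garc\'ia's argument. Write $m \coloneqq |\A(p)|$. The first step is to observe that every element of $\A(p)\A(p)$ is of the form $ab$ with $a,b \in \A(p)$, so the map sending an unordered pair $\{a,b\}$ (with repetitions allowed) to the product $ab$ surjects onto $\A(p)\A(p)$. Since the number of such unordered pairs drawn from an $m$-element set is $\binom{m}{2} + m = \binom{m+1}{2}$, this gives
$$
    |\A(p)\A(p)| \;\leqslant\; \binom{m+1}{2} \;=\; \frac{m(m+1)}{2}.
$$

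The second step is to insert the lower bound of Theorem~\ref{thm:product}. Because $13/14 < 1$, the error term $O\big(p^{13/14}(\log p)^{4/7}\big)$ is $o(p)$, so the theorem yields $|\A(p)\A(p)| \geqslant p\,(1+o(1))$. Chaining this with the display above gives
$$
    \frac{m(m+1)}{2} \;\geqslant\; p\,(1+o(1)),
$$
and hence $m(m+1) \geqslant 2p\,(1+o(1))$. Since the same inequality already forces $m = O(\sqrt{p})$, the linear term $m$ is of strictly lower order than $m^2$, so solving the quadratic inequality leaves $m^2 \geqslant 2p\,(1+o(1))$, i.e. $m \geqslant \big(\sqrt{2}+o(1)\big)\sqrt{p}$, which is exactly the claim.

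I do not expect a genuine obstacle in this deduction: the entire weight of the result sits in Theorem~\ref{thm:product}, and once that is in hand the corollary is immediate. The only point demanding care is the bookkeeping of the $o(1)$ factors---verifying that the subpolynomial error term contributes nothing to the leading constant and that the $m$ versus $m^2$ comparison is legitimate---but this is routine and does not affect the value $\sqrt{2}$.
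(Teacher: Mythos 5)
Your proposal is correct and is exactly the deduction the paper intends: it combines Theorem~\ref{thm:product} with the observation $\binom{|\A(p)|+1}{2} \geqslant |\A(p)\A(p)|$ already recorded in the introduction for Garc\'ia's argument. The only cosmetic slip is the phrase ``forces $m = O(\sqrt{p})$'' --- what you actually need (and have) is that either $m \geqslant \sqrt{2p}$ outright or the linear term $m$ is negligible against $m^2 \gg p$, and either way the conclusion $m \geqslant (\sqrt{2}+o(1))\sqrt{p}$ follows.
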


One of the natural ways to generalize this problem is to consider it in a `short interval' setting (see \cite{garaev-hernandez,GLS04,KlurMun,Lev}). {\it Throughout this paper}, let $p$ be a large enough prime and $L,N$ be integers such that $0 < L+1 < L+N < p$. Following Garaev and Hern\'andez \cite{garaev-hernandez}, we define a `short interval' analogue of $\A(p)$ as follows:
$$ \A(L, N) \coloneqq \{n! \bmod p: L + 1 \leqslant n \leqslant L + N\}. $$
As $L$ will not play any role, we write $\A_N$ for short.
To bound the cardinality of this set from below, it is usually fruitful to estimate the size of 
$\A_N/\A_N$,
the set of pairwise fractions, since we trivially have 
$|\A_N|^2 \geqslant |\A_N/\A_N|$.
The first lower bounds on the size of this set of fractions were linear on $N$  (see \cite{GLS04,KlurMun}), while Garaev and Hern\'andez \cite{garaev-hernandez} found the following logarithmic improvement.

\begin{theorem*}[\bf Garaev-Hern\'andez]
	Let $p^{1/2+\varepsilon}<N<p/10$. Then
	\begin{equation*}
		\left| \A_N/\A_N\right|
		\geqslant 
		c_0 N\log\left(\frac{p}{N}\right) 
	\end{equation*}
	for some $c_0=c_0(\varepsilon)>0$.
\end{theorem*}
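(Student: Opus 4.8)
The plan is to pass from fractions to products of consecutive integers and then to a second–moment (energy) estimate. Since the elements of $\A_N$ are the residues $n! \bmod p$, every element of $\A_N/\A_N$ has the form $m!/n!$, and for $L+1\le n<m\le L+N$ one has $m!/n!\equiv \pi(n,m):=\prod_{k=n+1}^{m}k \pmod p$. Thus $\A_N/\A_N$ contains all residues of products of consecutive integers taken from the interval (together with their inverses). Writing $m(c):=\#\{(n,m):L<n<m\le L+N,\ \pi(n,m)\equiv c\}$, we have $\sum_c m(c)=\binom N2$, and Cauchy--Schwarz gives
$$\left|\A_N/\A_N\right|\ \ge\ \frac{\big(\sum_c m(c)\big)^2}{\sum_c m(c)^2}\ =\ \frac{\binom N2^{2}}{E},\qquad E:=\sum_c m(c)^2 .$$
So it suffices to prove the spreading bound $E\ll N^{3}/\log(p/N)$, which would yield the theorem with $c_0\asymp(\varepsilon)\,1$.

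The energy $E$ counts quadruples with $\pi(n,m)\equiv\pi(n',m')$, i.e. solutions of $\prod_{k=n+1}^{m}k\equiv\prod_{k=n'+1}^{m'}k \pmod p$. The first move is to cancel the factors over the overlap of the two runs $(n,m]$ and $(n',m']$, so that each relation becomes an equality of products over two \emph{disjoint} blocks of consecutive integers. The diagonal (overlapping runs collapsing to a trivial identity) contributes only $O(N^{2})$, which is negligible against the target $N^3/\log(p/N)$, so the whole content lies in the off–diagonal count. To harvest the logarithmic factor I would decompose dyadically according to the lengths $\ell=m-n$ and $\ell'=m'-n'$ of the runs, $\ell,\ell'\in[2^{s},2^{s+1})$, bound the number of solutions at each scale, and sum over the $\asymp\log(p/N)$ admissible scales; the constraint $N<p/10$ guarantees there is enough room for the runs, while $N>p^{1/2+\eps}$ is what makes the per–scale error terms negligible.

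The main obstacle is the per–scale collision bound. Fixing three of the four endpoints, the fourth is forced to satisfy a congruence $\prod_{k}(x+k)\equiv c\pmod p$, a polynomial of bounded degree whose root count via Lagrange's theorem is too lossy to see the saving by itself. The genuine input I would use is a nontrivial estimate for the associated multiplicative–character sums: introducing $S_\chi=\sum_{L+1\le n\le L+N}\chi(n!)$, one expands $E=\tfrac1{p-1}\sum_\chi|T_\chi|^2$ with $T_\chi=\sum_{n<m}\chi(m!)\overline{\chi(n!)}$, the principal character producing a main term $\asymp\binom N2^2/p$ that is comfortably within budget once $N<p/10$. Everything then reduces to bounding the incomplete factorial character sums $\big|\sum_{n\in I}\chi(n!)\big|$ uniformly over intervals $I$ and nonprincipal $\chi$, with a saving strong enough to survive summation over all dyadic scales. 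Controlling these sums — equivalently, ruling out that the products $\pi(n,m)$ concentrate on few residues — is the heart of the matter; the reduction to disjoint blocks, the Cauchy--Schwarz step, the dyadic bookkeeping, and the verification that the diagonal and principal terms are of the claimed order are all routine once that estimate is in hand.
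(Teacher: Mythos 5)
There is a genuine gap, and it sits exactly where you place ``the heart of the matter.'' Your Cauchy--Schwarz reduction $|\A_N/\A_N|\geqslant \binom{N}{2}^2/E$ is fine, but the two inputs you would need to bound $E$ are not available. First, for the per-scale collision counts you are implicitly counting points on curves $\prod_k(x+k)=\prod_k(y+k)$ whose degree is the run length, which ranges up to $N$; every Weil/Lang--Weil/Bombieri-type estimate carries an error term of size $d^2\sqrt{p}$ (see Lemmas \ref{lem:qj} and \ref{intersection}), so these bounds are vacuous as soon as the run length exceeds about $p^{1/4}$, and summing them over all dyadic scales up to $N>p^{1/2+\eps}$ is hopeless. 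Second, the character-sum route you propose as the ``genuine input'' requires a nontrivial uniform bound on $\sum_{n\in I}\chi(n!)$ for individual nonprincipal $\chi$; no such pointwise bound is known (only averaged versions over $\chi$, as in Garaev--Luca--Shparlinski), and obtaining one is an open problem at least as hard as the theorem itself. So the proposal reduces the statement to an unproved estimate rather than proving it.

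The actual argument of Garaev--Hern\'andez, as described in the paper, avoids both obstacles by working only with runs of \emph{bounded} length: one shows $\A_N/\A_N\supseteq X_1\cup\dots\cup X_M$ with $X_j=P_j(\I)$ for the bounded-degree polynomials $P_j(x)=(x+1)\cdots(x+j)$, where $M$ is small enough that the $d^2\sqrt{p}(\log p)^2$ error terms are negligible. Weil/Bombieri bounds then give $|X_j|\gg N/(1+Nj/p)$ and show the pairwise intersections $|X_k\cap X_j|$ are small, so inclusion--exclusion yields $|\A_N/\A_N|\gg\sum_{j\leqslant M}|X_j|$, and the factor $\log(p/N)$ emerges from summing the harmonic-type series in $j$. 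In short: the published proof is a first-moment union bound over a carefully truncated family of low-degree polynomial images, not a second-moment bound over all consecutive products; the truncation is precisely what makes the algebraic-geometric input applicable.
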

The strategy in \cite{garaev-hernandez} was to observe $\A_N/\A_N$ to contain the sets $X_1, X_2, \dots, X_M$ defined as $X_j = \{(x + 1)(x + 2)\dots (x+j) : L + 1 \leqslant x \leqslant L + N - M\}$, and then prove $X_j$'s to be `large', but their intersections $X_k \cap X_j$ to be `small', which makes inclusion-exclusion formula applicable: 
$$
|\A_N/\A_N| \geqslant |X_1 \cup X_2 \cup \dots| \geqslant \sum_{j}|X_j| - \sum_{k < j}|X_k \cap X_j| \gg \sum_{j}|X_j|.
$$

In the present paper we give the following improvement of this result.
\begin{theorem}\label{thm:n-fact}
Let $N$ be such that $c_5 \sqrt{p}(\log{p})^2 \leqslant N \leqslant p$. 
Let $K := \frac{p}{N}, Q := \frac{N}{\sqrt{p}(\log{p})^2}$. Then
$$
    |\A_N/\A_N|
    \geqslant
    \begin{cases}
    p + O(p^{13/14}(\log{p})^{4/7}) &  \mbox{ if } N \geqslant c_1 p^{13/14}(\log{p})^{4/7}, \\
    p + O(p^{5/6}K^{4/3}(\log{p})^{4/3}) & \mbox{ if } c_1 p^{13/14}(\log{p})^{4/7} \geqslant N \geqslant c_2 p^{7/8}\log{p}, \\
    cNQ^{1/3}(\log{Q})^{-2/3} & \mbox { if } c_2 p^{7/8}\log{p} \geqslant N \geqslant c_3 p^{4/5}(\log{p})^{8/5}, \\
    cNK^{1/2} & \mbox { if } c_3 p^{4/5}(\log{p})^{8/5} \geqslant N \geqslant c_4 p^{4/5}(\log{p})^{4/5}, \\
    cNQ^{1/3} & \mbox { if } c_4 p^{4/5}(\log{p})^{4/5} \geqslant N \geqslant c_5 p^{1/2}(\log{p})^{2}.
    \end{cases}
$$
where $c, c_1, c_2, c_3, c_4, c_5 > 0$ are some absolute constants, whose values can be extracted from the proof.
\end{theorem}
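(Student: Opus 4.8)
The plan is to exhibit inside $\A_N/\A_N$ a large family of products of consecutive integers and to show, by a second--moment (multiplicative--energy) argument, that these products already equidistribute well enough to cover almost all of $\F_p^\times$. Write $P_j(x)\coloneqq (x+1)(x+2)\cdots(x+j)=\frac{(x+j)!}{x!}$, so that $P_j(x)\in\A_N/\A_N$ whenever $x$ and $x+j$ both lie in $\{L+1,\dots,L+N\}$. Fix a parameter $M$ (to be optimised) with $M\le N/2$, restrict $x$ to a subinterval $\I$ of length $\asymp N$, let $j$ range over $1\le j\le M$, and introduce the representation function
\[
 r(t)\coloneqq \#\{(x,j):x\in\I,\ 1\le j\le M,\ P_j(x)\equiv t \bmod p\}.
\]
Then $T\coloneqq\sum_t r(t)=|\I|\,M\asymp NM$, and $S\coloneqq\operatorname{supp} r\subseteq\A_N/\A_N$. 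The variance inequality
\[
 |\F_p^\times\setminus S|\cdot\Big(\tfrac{T}{p}\Big)^2\le\sum_t\Big(r(t)-\tfrac{T}{p}\Big)^2=E-\frac{T^2}{p},\qquad E\coloneqq\sum_t r(t)^2,
\]
then yields simultaneously the ``below $p$'' bound $|\A_N/\A_N|\ge T^2/E$ and the ``near $p$'' bound $|\A_N/\A_N|\ge p-\tfrac{p^2}{T^2}\big(E-\tfrac{T^2}{p}\big)$. Everything reduces to an upper bound for the energy $E$.

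The energy counts quadruples, $E=\#\{(x_1,j_1,x_2,j_2):P_{j_1}(x_1)\equiv P_{j_2}(x_2)\}$ with $x_i\in\I$, $j_i\le M$, and for each fixed pair $(j_1,j_2)$ this is the number of points of the plane curve $C_{j_1,j_2}:P_{j_1}(X)=P_{j_2}(Y)$ lying in the short box $\I\times\I$. I would first isolate the ``large'' components: since the multiset of roots of $P_j$ is $\{-1,\dots,-j\}$, the polynomial $P_{j_1}(X)-P_{j_2}(Y)$ can contain an axis--parallel or diagonal line only when $j_1=j_2$, in which case it is exactly $X=Y$ and contributes the diagonal term $\asymp MN$ in total. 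Every other component is genuinely two--dimensional and should contribute only $\asymp |\I|^2/p=N^2/p$ points per curve, i.e.\ the main term $T^2/p$ after summation. Thus the target shape of the estimate is
\[
 E=\frac{T^2}{p}+O(MN)+\mathcal E,
\]
where $\mathcal E$ collects the fluctuations of the individual box counts around $N^2/p$.

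The crux, and the step I expect to be hardest, is bounding $\mathcal E$. For a fixed $(j_1,j_2)$ one completes the incomplete sum over $\I\times\I$ into complete additive--character sums along $C_{j_1,j_2}$ and invokes Weil's bound; an absolutely irreducible component of degree $d\le\max(j_1,j_2)\le M$ contributes a box--count error of size $O(d^2\sqrt p(\log p)^2)$, and summation over all $j_1,j_2\le M$ gives the decisive estimate $\mathcal E\ll M^4\sqrt p(\log p)^2$. Making this rigorous requires proving that $P_{j_1}(X)-P_{j_2}(Y)$, once its unique linear factor is removed, splits into absolutely irreducible factors over $\overline{\F_p}$ to which Weil applies --- that is, understanding the factorisation of differences of products of consecutive linear forms, uniformly in $j_1,j_2<p$. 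This is the genuine algebraic obstacle: the separability of $P_j\bmod p$ for $j<p$ helps, but absolute irreducibility of $g(X)-h(Y)$ for such $g,h$ must be argued carefully, and in the intermediate ranges one must further shave logarithmic factors from the box--count error to reach the sharper exponents.

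Granting $E=T^2/p+O(MN)+O(M^4\sqrt p(\log p)^2)$ with $T\asymp MN$, the five cases emerge from optimising $M$. Writing $\lambda\coloneqq MN/p$, the near--$p$ bound becomes $|\F_p^\times\setminus S|\ll p/\lambda+\lambda^2 p^{9/2}N^{-4}(\log p)^2$; minimising over $\lambda\ge1$ produces an error $\asymp p^{13/6}N^{-4/3}(\log p)^{2/3}$, which equals $p^{5/6}K^{4/3}(\log p)^{4/3}$ in the variable $K=p/N$ (Case~2) and saturates at $p^{13/14}(\log p)^{4/7}$ for the largest $N$, recovering Theorem~\ref{thm:product} (Case~1). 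When $MN<p$ one instead maximises $T^2/E$: the balance $MN\asymp M^4\sqrt p(\log p)^2$, i.e.\ $M\asymp Q^{1/3}$ with $Q=N/(\sqrt p(\log p)^2)$, yields the bound $\asymp NQ^{1/3}$ (Case~5), while log--refined forms of the energy estimate give the sharper Cases~3 and~4 in the intermediate windows. The transition between the ``near $p$'' and ``below $p$'' behaviour occurs exactly where $Q^{1/3}\asymp K$, i.e.\ at $N\asymp p^{7/8}$, matching the boundary $c_2p^{7/8}\log p$ in the statement.
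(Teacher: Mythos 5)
Your overall architecture --- covering $\A_N/\A_N$ by the images $P_j(\I)$ of the polynomials $P_j(x)=(x+1)\cdots(x+j)$, running a second-moment argument, and reducing everything to counting points of the curves $P_{j_1}(X)=P_{j_2}(Y)$ in a box via completion of exponential sums and Weil-type bounds --- is the same as the paper's; your variance formulation with the representation function $r(t)$ is essentially the same Cauchy--Schwarz as the paper's Lemma~\ref{lem:set_intersect}, and your optimisation of $M$ lands on the right balances for Cases 1, 2, 3 and 5. However, there is a genuine gap at exactly the step you flag as ``the genuine algebraic obstacle'', and it is not a removable technicality: the energy bound $E=T^2/p+O(MN)+O(M^4\sqrt p(\log p)^2)$ is false as stated when $j$ ranges over all of $\{1,\dots,M\}$. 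The main term $T^2/p$ requires each curve $P_{j_1}(X)=P_{j_2}(Y)$, after removing linear factors, to have $p+O(d^2\sqrt p)$ points, which by Lang--Weil needs the remaining polynomial to be absolutely irreducible; if it splits into $r$ components defined over $\F_p$, the curve has about $rp$ points, the box count becomes about $rN^2/p$, and since $r$ can a priori be as large as the degree, the ``main term'' of the energy is no longer $T^2/p$. This is a real phenomenon for composite $j$: for instance $P_4(x)=(x^2+5x+4)(x^2+5x+6)$ is decomposable, so $P_4(x)-P_4(y)$ is divisible by $x+y+5$ as well as by $x-y$; more generally your claim that for $j_1=j_2$ the only linear component is $X=Y$ fails for every even $j$ because of the symmetry $P_j(-(j+1)-x)=P_j(x)$. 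The paper's resolution --- restricting $j$ to odd primes so that $P_j$ is indecomposable and is not of Dickson type, then invoking the Fried--Turnwald theorem for $j=k$ and Schmidt's criterion (with $\psi=k/j$, $k,j$ coprime) for $j\neq k$ to get absolute irreducibility of $Q_{kj}$ --- is the actual mathematical content behind the strong cases of the theorem, and your proposal leaves it entirely open.

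Two consequences of that omission show up in your case analysis. First, once $j$ is restricted to primes, only about $M/\log M$ values are available below $M$; this is precisely the source of the $(\log Q)^{-2/3}$ in Case 3, which your unrestricted count cannot produce. Second, the bound $cNK^{1/2}$ of Case 4 is not obtained by ``shaving logarithmic factors'': in that range the paper deliberately reverts to all $j\le M$ (primes are too sparse there) and uses the crude bound $J(P_k,P_j)\le pj$ in place of absolute irreducibility, which introduces the extra error terms $N^2j/p$ in both $|X_j|$ and $|X_k\cap X_j|$ and yields the constraint $M\ll\sqrt K$, whence $M=\min(\sqrt K,Q^{1/3})$. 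That is a different mechanism from the one you describe, and your sketch does not recover that case.
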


\begin{corollary}
    For $N \gg p^{7/8}\log{p}$,
        $$
        |\A_N| \geqslant (1 + o(1))\sqrt{p}.
        $$
\end{corollary}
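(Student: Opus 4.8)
The plan is to combine the trivial inequality $|\A_N|^2 \geqslant |\A_N/\A_N|$ (already recorded in the excerpt) with the lower bounds of Theorem~\ref{thm:n-fact}. Since the square root is monotone, it suffices to show that $|\A_N/\A_N| \geqslant (1-o(1))p$ whenever $N \gg p^{7/8}\log p$; the claimed estimate $|\A_N| \geqslant \sqrt{(1-o(1))p} = (1+o(1))\sqrt{p}$ then follows at once. Thus the corollary is purely a matter of checking that the relevant error term in Theorem~\ref{thm:n-fact} is $o(p)$ in this range.

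First I would verify that the hypothesis $N \gg p^{7/8}\log p$ places us in one of the two top regimes of Theorem~\ref{thm:n-fact}, where the bound has the shape $p + O(\cdot)$. For $p$ large enough, $N \gg p^{7/8}\log p$ certainly forces $N \geqslant c_2\, p^{7/8}\log p$, so we are in either the first case ($N \geqslant c_1 p^{13/14}(\log p)^{4/7}$) or the second case ($c_1 p^{13/14}(\log p)^{4/7} \geqslant N \geqslant c_2 p^{7/8}\log p$). In both, $|\A_N/\A_N| \geqslant p + O(E)$ for an explicit error $E$, and everything reduces to confirming $E = o(p)$.

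The only genuine computation is the error analysis in the second case. In the first case the error $E = p^{13/14}(\log p)^{4/7}$ satisfies $E/p = (\log p)^{4/7} p^{-1/14} \to 0$, so $E = o(p)$ with no further work. In the second case, writing $K = p/N$, the error is $E = p^{5/6} K^{4/3}(\log p)^{4/3}$, and substituting $K = p/N$ gives
\[
\frac{E}{p} = p^{-1/6}\Big(\frac{p}{N}\Big)^{4/3}(\log p)^{4/3}
= p^{7/6}\,N^{-4/3}(\log p)^{4/3}
= \Big(\frac{p^{7/8}\log p}{N}\Big)^{4/3},
\]
which tends to $0$ precisely under the hypothesis $N \gg p^{7/8}\log p$. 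This also explains why $p^{7/8}\log p$ is the correct threshold: the corollary holds exactly as soon as the second-case error becomes negligible, and the boundary of that regime is $N = p^{7/8}\log p$.

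I do not expect any serious obstacle, as this is a direct consequence of Theorem~\ref{thm:n-fact}; the single point that deserves a sentence of care is that the $o(p)$ deficit in the fraction-set bound survives the square root, i.e. $(1-o(1))^{1/2} = 1 - o(1)$, so that the leading constant of the resulting bound on $|\A_N|$ is genuinely $1$ and not something smaller.
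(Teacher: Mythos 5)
Your argument is correct and is exactly the one the paper intends: the corollary is stated without proof as an immediate consequence of the trivial inequality $|\A_N|^2 \geqslant |\A_N/\A_N|$ combined with the first two cases of Theorem~\ref{thm:n-fact}, whose error terms you correctly verify to be $o(p)$ for $N \gg p^{7/8}\log p$ (read so that $N/(p^{7/8}\log p) \to \infty$, as the $o(1)$ conclusion requires). Nothing further is needed.
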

To derive it, we continue the strategy from \cite{garaev-hernandez} as follows: using strong results from Algebraic Geometry, we prove `best possible' bounds $|X_j| \geqslant (1 + o(1))N$ and $|X_k \cap X_j| \leqslant (1 + o(1))N^2/p$ for prime $k, j$. Then we observe, that bounds on sets $X_j$ and their intersections imply they behave `too independently', and therefore the size of their union is at least $p + o(p)$ (see Lemma \ref{lem:set_intersect}), which implies that $\A_N/\A_N$ has size at least $p + o(p)$. 

This strategy turns out to be helpful when proving Theorem \ref{thm:product} as well.

One of the nice applications of these results deals with representation of the residues as a product of several factorials. It is not hard to see that the classical Wilson's theorem implies the following. Any given $a \in [p-1]$ can be represented\footnote{Indeed, one may easily verify that, depending on the `parity' of the inverse residue $b \equiv a^{-1}$, we have either $a \equiv (b-1)!(p-1-b)!$, or $a \equiv -(b-1)!(p-1-b)! \equiv (b-1)!(p-1-b)!(p-1)!$ modulo~$p$.} as a product of three factorials
\begin{equation*}
	a \equiv n_1!n_2!n_3! \bmod p
\end{equation*}
for some $n_1,n_2, n_3 \in [p-1]$. The aforementioned conjecture on the `randomness' of $\A(p)$ implies that even two factorials are enough. However, if we add an additional constraint the all $n_i$ should be of the magnitude $o(p)$ as $p \rightarrow \infty$, it becomes not so clear how many factorials are required. Garaev, Luca, and Shparlinski \cite{GLS04} coped with seven.

\begin{theorem*}[\bf Garaev, Luca, and Shparlinski]
	Fix any positive $\varepsilon < 1/12$. Then for all prime $p$, every residue class $a \not\equiv 0  \bmod p$ can be represented as a product of seven factorials,
	$$a \equiv n_1!\dots n_7! \pmod p,$$
	such that $n_0 \coloneqq \max_{1 \le i \le 7}{n_i} = O(p^{11/12+\varepsilon})$ as $p \rightarrow \infty$.
\end{theorem*}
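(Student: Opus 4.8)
The plan is to reduce the problem to a single estimate for a multiplicative character sum attached to factorials, and then to detect a representation of every $a\not\equiv 0$ by orthogonality (a circle-method argument over $\F_p^\times$). Write $N = p^{11/12+\varepsilon}$ and, for a multiplicative character $\chi$ modulo $p$, set
$$ S_\chi \;=\; \sum_{n=1}^{N}\chi(n!). $$
Since $n!\not\equiv 0$ for $1\le n\le N<p$, the number of representations of a fixed $a\in\F_p^\times$ as a product of seven factorials with arguments in $[1,N]$ is
$$ r(a) \;=\; \#\{(n_1,\dots,n_7)\in[1,N]^7 : n_1!\cdots n_7!\equiv a\} \;=\; \frac{1}{p-1}\sum_{\chi}\overline{\chi}(a)\,S_\chi^{\,7}. $$
The principal (trivial-character) term equals $N^7/(p-1)$, so it suffices to show that the contribution of the nontrivial characters is strictly smaller in absolute value.

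For the error term I would use the crude bound
$$ \Big|\,\frac{1}{p-1}\sum_{\chi\neq\chi_0}\overline{\chi}(a)\,S_\chi^{\,7}\,\Big| \;\le\; \max_{\chi\neq\chi_0}|S_\chi|^{\,7}, $$
so that $r(a)>0$ for every $a\in\F_p^\times$ as soon as
$$ \max_{\chi\neq\chi_0}|S_\chi| \;<\; \frac{N}{(p-1)^{1/7}}. $$
With $N=p^{11/12+\varepsilon}$ the right-hand side has size $p^{65/84+\varepsilon}$, so the whole theorem reduces to the uniform bound
$$ \max_{\chi\neq\chi_0}\Big|\sum_{n=1}^{N}\chi(n!)\Big| \;=\; o\!\left(N p^{-1/7}\right), $$
and it is this estimate on the factorial character sum that carries all the difficulty; the parity/sign bookkeeping and the reduction count are otherwise routine, while the three-factorial identity from Wilson's theorem noted above serves only as a sanity check that unrestricted arguments trivially suffice.

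To attack $S_\chi$ I would exploit the recursion $\chi((n+h)!)\overline{\chi(n!)}=\chi\big(\prod_{i=1}^{h}(n+i)\big)$, i.e.\ the fact that the ratio of two nearby factorials is the value of the explicit polynomial $P_h(n)=\prod_{i=1}^{h}(n+i)$. A van der Corput / shifting step then bounds $|S_\chi|^2$ by a diagonal contribution of size $N^2/H$ plus an average, over shifts $1\le h<H$, of the incomplete sums $\sum_{n\le N}\chi(P_h(n))$; since $P_h$ has $h$ simple roots and is not a perfect power, Weil's bound together with Pólya--Vinogradov completion controls each inner sum, and optimising the shift parameter $H$ yields a power saving over the trivial bound $|S_\chi|\le N$.

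The hard part will be squeezing enough of a saving out of this scheme to reach the threshold $Np^{-1/7}$ exactly at $N=p^{11/12+\varepsilon}$: the naive balancing of diagonal against off-diagonal terms loses too much, because the degree of $P_h$ grows with the shift and the off-diagonal Weil contribution is correspondingly expensive. Overcoming this --- by a more careful choice of the family of shifts, by incorporating the multiplicative structure of the values $P_h(n)$ rather than treating them as generic polynomials, or by splitting the seven factorials into blocks so that part of the averaging is carried out combinatorially before invoking Weil --- is the crux. Once a bound of the shape $|S_\chi|\ll p^{3/4+o(1)}$, uniform for $N$ up to $p^{11/12+\varepsilon}$, is in hand, the displayed threshold is met, since $3/4<65/84$, and the theorem follows. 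Any genuine improvement in the quality of this character-sum bound, or in the number of factorials one is willing to use, immediately lowers the exponent $11/12$, which is consistent with the sharper estimates of Theorem~\ref{thm:n-fact} pushing it down to $6/7$.
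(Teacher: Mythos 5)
This statement is not proved in the paper at all: it is a background theorem quoted from Garaev, Luca, and Shparlinski \cite{GLS04}, so there is no in-paper argument to compare yours against. The comparison below is therefore with the argument of \cite{GLS04} and its refinements (Garc\'ia, Garaev--Hern\'andez), which is also the route the present paper follows to get its own Theorem 3 with exponent $6/7$.

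Your orthogonality setup is correct ($r(a)=\frac{1}{p-1}\sum_\chi\overline{\chi}(a)S_\chi^{7}$ with main term $N^7/(p-1)$), but the proof has a genuine gap exactly where you place it, and the gap is not closable by the method you sketch. Bounding the nontrivial-character contribution by $\max_{\chi\neq\chi_0}|S_\chi|^{7}$ discards all averaging over $\chi$ and forces the requirement $\max_{\chi\neq\chi_0}|S_\chi|<Np^{-1/7}=p^{65/84+\varepsilon}$. The van der Corput shifting scheme you describe (shift by $h$, use $(n+h)!/n!=P_h(n)$, complete and apply Weil) yields, after optimizing the shift length, $|S_\chi|\ll N^{3/4}p^{1/8}(\log p)^{1/4}$, which at $N=p^{11/12+\varepsilon}$ is $p^{13/16+o(1)}$; since $13/16=273/336$ exceeds $65/84=260/336$, your own method falls short of your own threshold, and the stronger uniform bound $|S_\chi|\ll p^{3/4+o(1)}$ that you ultimately invoke is not known. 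Rescuing the reduction with a second moment $\sum_\chi|S_\chi|^{2}=(p-1)\,\#\{(m,n):m!\equiv n!\}$ runs into the equally open problem of bounding coincidences among factorials, which is essentially the subject of the paper. So the entire difficulty has been deferred to an estimate that is unavailable.

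The actual proof avoids sup-norm bounds on $\sum_{n\le N}\chi(n!)$ altogether. Its load-bearing ingredients are (i) Wilson's theorem, which both represents every $a$ as a product of three unrestricted factorials (the footnote identity $a\equiv(b-1)!(p-1-b)!\,(p-1)!^{\,j}$ that you dismiss as a sanity check) and converts inverse factorials into factorials via $1/y!\equiv\pm(p-1-y)!$, and (ii) lower bounds on the value sets $P_j(\I)$, equivalently on $|\A_N/\A_N|$, obtained from the Weil and Bombieri--Chalk--Smith estimates for the curves $P_j(x)=P_k(y)$. The exponent $11/12$ arises from balancing the $|\I|^2p^{-1}$ and $d^2\sqrt{p}(\log p)^2$ error terms in those value-set bounds, not from a pointwise bound on a factorial character sum; this is precisely why the present paper's sharper value-set estimates (Theorem \ref{thm:n-fact}) push the exponent down to $6/7$ by ``the same proof''.
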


During the last two decades, the number of multipliers from the last theorem was not reduced even to $6$. However, there were certain improvements on the value of $n_0$. Garc\'ia \cite{garcia2008} showed that the Theorem above holds with $n_0 = O(p^{11/12}\log^{1/2}p)$, while Garaev and Hern\'andez \cite{garaev-hernandez} relaxed it to $O(p^{11/12}\log^{-1/2}p)$. Since our new Theorem~\ref{thm:n-fact} improves the bounds used in the latter proof, one can obtain a slight (again, {\it polynomial}) improvement on the value of $n_0$ by following the same proof.

\begin{theorem}
	Fix any positive $\varepsilon < 1/7$. Then for all prime $p$, every residue class $a \not\equiv 0  \bmod p$ can be represented as a product of seven factorials,
	$$a \equiv n_1!\dots n_7! \pmod p,$$
	such that $n_0 \coloneqq \max_{1 \le i \le 7}{n_i} = O(p^{6/7+\varepsilon})$ as $p \rightarrow \infty$.
\end{theorem}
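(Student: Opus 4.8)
The plan is to follow verbatim the argument of Garaev and Hern\'andez \cite{garaev-hernandez} that produced the exponent $11/12$, and to feed into it the stronger lower bound on $|\A_N/\A_N|$ supplied by Theorem~\ref{thm:n-fact}. First I would isolate the single place where their proof invokes a lower bound for the factorial–ratio set. Tracing their covering argument, the representability of an arbitrary $a \not\equiv 0 \bmod p$ as a product of seven factorials with all arguments at most $N$ is guaranteed once an inequality of the shape
$$N \cdot |\A_N/\A_N| \gtrsim p^{11/6}$$
holds, up to logarithmic factors; here the seven multipliers come from combining Wilson's theorem with a covering step assembled from the runs of consecutive integers $(x+1)(x+2)\cdots(x+j) = (x+j)!/x!$, that is, precisely the sets $X_j$ whose cardinalities and mutual intersections drive the estimate for $|\A_N/\A_N|$. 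As a sanity check on the identification of this threshold, inserting the Garaev--Hern\'andez bound $|\A_N/\A_N| \gtrsim N\log(p/N)$ turns it into $N^2\log(p/N) \gtrsim p^{11/6}$, i.e. $N \asymp p^{11/12}(\log p)^{-1/2}$, which reproduces their value of $n_0$ including the logarithmic factor.

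Next I would substitute the new bound. For $N$ of size $p^{6/7}$ up to logarithmic factors we land in the third regime of Theorem~\ref{thm:n-fact}, where $|\A_N/\A_N| \gtrsim N Q^{1/3}(\log Q)^{-2/3}$ with $Q = N/(\sqrt{p}(\log p)^2)$. Since $Q^{1/3} = N^{1/3}p^{-1/6}(\log p)^{-2/3}$, this gives
$$N\cdot|\A_N/\A_N| \gtrsim N^2 Q^{1/3}(\log Q)^{-2/3} = N^{7/3}p^{-1/6}(\log p)^{-2/3}(\log Q)^{-2/3},$$
which exceeds $p^{11/6}$ exactly when $N \gtrsim p^{6/7}$, up to a fixed power of $\log p$. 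Thus taking $N \asymp p^{6/7}(\log p)^{A}$ for a suitable absolute constant $A$ makes the covering inequality hold, so every nonzero residue becomes a product of seven factorials with arguments at most $N \le p^{6/7+\varepsilon}$. The hypothesis $\varepsilon < 1/7$ is merely what keeps $n_0 = o(p)$, so that the statement is nontrivial; note that $p^{6/7}$ comfortably lies inside the third regime of Theorem~\ref{thm:n-fact}, since $4/5 < 6/7 < 7/8$.

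The main obstacle is not the arithmetic of the optimization, which is routine once the threshold inequality is identified, but the faithful reproduction of the Garaev--Hern\'andez covering step together with the verification that the factorial–ratio bound enters it \emph{only} through the displayed inequality. In particular I would need to re-examine how the runs $(x+1)\cdots(x+j)$ are reassembled, via Wilson's theorem, into genuine products of exactly seven factorials whose arguments remain bounded by $N$, and confirm that this reassembly is insensitive to which lower bound for $|\A_N/\A_N|$ is used, so that Theorem~\ref{thm:n-fact} can be dropped in without altering the number of multipliers. A secondary point requiring care is the bookkeeping of the logarithmic factors and the boundary between the cases of Theorem~\ref{thm:n-fact}, to ensure that the chosen $N$ genuinely sits in the third regime and that the resulting power of $\log p$ is absorbed into the arbitrarily small $\varepsilon$.
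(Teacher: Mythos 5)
Your proposal matches the paper's own treatment, which likewise gives no independent argument but simply observes that the Garaev--Hern\'andez proof of the seven-factorial theorem uses the lower bound on $|\A_N/\A_N|$ only through a single threshold inequality, into which one substitutes Theorem~\ref{thm:n-fact}; your identification of that threshold as $N\cdot|\A_N/\A_N|\gg p^{11/6}$ is corroborated both by your recovery of the exponent $11/12$ with the factor $(\log p)^{-1/2}$ and by the resulting $N\asymp p^{6/7}(\log p)^{4/7}$ sitting squarely in the third regime of Theorem~\ref{thm:n-fact}. The optimization and the role of $\varepsilon<1/7$ are handled correctly, so the proposal is sound and essentially identical to the paper's (sketched) proof.
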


The remainder of the text has the following structure. In Section 2 we introduce some notations and useful lemmas, in Section 3 we prove results on images of `generic' polynomials, in Section 4 we apply these results to polynomials $P_j(x) = (x+1)\ldots (x+j)$, and, finally, in Sections 5 and 6 wee prove theorems \ref{thm:product} and \ref{thm:n-fact}.

\section{Conventions and Preliminary Results}
Here and below, $p$ denotes a large prime number.

Whenever $A$ is a set, we identify it with its indicator function, meaning
$$
A(x) = 
\begin{cases}
    1,\ x \in A, \\
    0,\ x \not\in A.
\end{cases}
$$

Throughout the paper, the standard notation $\ll, \gg$, and respectively $O$ and $\Omega$ is applied to positive quantities in the usual way. That is, $X \ll Y, Y \gg X, X = O(Y)$ and $Y = \Omega(X)$ all mean that $Y \geqslant cX$, for some absolute constant $c > 0$.  \\

A polynomial $f \in \F_p[x]$ is \emph{decomposable}, if $f = g \circ h$ for some polynomials $g, h \in \F_p[x]$ of degrees at least $2$. Otherwise it is  \emph{indecomposable}.

We recall that for any integer $d > 0$ and $a \in \F_p$, the \emph{Dickson polynomial} $D_{d,a} \in \F_p[x]$ is defined to be
the unique polynomial such that
$D_{d,a}(x + \frac{a}{x}) = x^d + (\frac{a}{x})^d$. There is also an explicit formula for it:
$$
    D_{d, a}(x) = \sum_{i = 0}^{\floor{d/2}}\frac{d}{d-i}{{d-i}\choose i} (-a)^i x^{d-2i}. 
$$

For a positive integer $j$ define the polynomial 
$$
P_j(x) = \prod_{i=1}^{j}(x + i).
$$

Given a set $A$ and a polynomial $P \in \F_p[x]$, denote by $P(A)$ the set $\{P(a)\ (\bmod\ p) : a \in A\}$.

A key lemma to estimate the union of sets:
\begin{lemma}\label{lem:set_intersect}
Let $A_1, A_2, \dots, A_n$ be finite sets, and let $a \geqslant b$ be positive integers, such that the properties hold:
\begin{itemize}
    \item $|A_i| \geqslant a$ for all $i$,
    \item $|A_i \cap A_j| \leqslant b$ for all $i \neq j$.
\end{itemize}
Let $A := A_1 \cup A_2 \cup \dots A_n$. Then
$$
|A| \geqslant \frac{a^2}{b}\bigg( 1 - \frac{a}{nb}\bigg).
$$
\end{lemma}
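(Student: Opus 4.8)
The plan is to run a standard second-moment (Cauchy--Schwarz) double-counting argument on the multiplicities of the elements of $A$. For each $x \in A$, let
$$
d(x) \coloneqq \#\{i : x \in A_i\} = \sum_{i=1}^{n} A_i(x)
$$
denote the number of sets containing $x$. Summing over $x$ in two different ways, the two hypotheses will control the total multiplicity $\sum_x d(x)$ from below and the total number of `collisions' $\sum_x d(x)(d(x)-1)$ from above, and Cauchy--Schwarz then relates these two quantities to $|A|$.

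Concretely, I would first record the two moment estimates. Interchanging the order of summation gives the first moment
$$
S \coloneqq \sum_{x \in A} d(x) = \sum_{i=1}^{n} |A_i| \geqslant na,
$$
using $|A_i| \geqslant a$. For the second moment, counting ordered pairs of distinct sets sharing a common element $x$ yields
$$
\sum_{x \in A} d(x)\big(d(x)-1\big) = \sum_{i \neq j} |A_i \cap A_j| \leqslant n(n-1)b,
$$
using $|A_i \cap A_j| \leqslant b$ and the fact that there are $n(n-1)$ ordered pairs $i \neq j$; hence $\sum_{x} d(x)^2 \leqslant S + n(n-1)b$. Applying Cauchy--Schwarz in the form $\big(\sum_x d(x)\big)^2 \leqslant |A| \sum_x d(x)^2$ then gives
$$
|A| \;\geqslant\; \frac{S^2}{\sum_x d(x)^2} \;\geqslant\; \frac{S^2}{S + n(n-1)b}.
$$

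The remaining, and only slightly delicate, point is that $S$ appears in both the numerator and the denominator, so one cannot naively substitute the bounds on $S$ in opposite directions. The fix is the observation that $t \mapsto t^2/(t+C)$ is increasing for $t, C > 0$, so replacing $S$ by its lower bound $na$ is legitimate and yields
$$
|A| \;\geqslant\; \frac{(na)^2}{na + n(n-1)b} \;=\; \frac{n a^2}{a + (n-1)b}.
$$
Finally I would enlarge the denominator using $a + (n-1)b \leqslant a + nb$ and verify the elementary inequality
$$
\frac{n a^2}{a + nb} \;\geqslant\; \frac{a^2}{b}\Big(1 - \frac{a}{nb}\Big),
$$
which, after clearing the positive denominators $a+nb$ and $nb^2$ and cancelling $a^2$, collapses to $n^2 b^2 \geqslant n^2 b^2 - a^2$, i.e. to $a^2 \geqslant 0$; this finishes the proof. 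I expect the monotonicity observation in the substitution step to be the only genuine subtlety, as everything else is routine; I also note that the hypothesis $a \geqslant b$ is not needed for the computation itself — it merely guarantees that the claimed bound is non-trivial exactly in the interesting range $nb > a$.
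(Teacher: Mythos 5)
Your proof is correct and follows essentially the same second-moment/Cauchy--Schwarz argument as the paper: bound $S=\sum_i|A_i|$ from below, bound $\sum_x d(x)^2$ by $S+n(n-1)b$, apply Cauchy--Schwarz, and then simplify via the same chain of elementary inequalities. Your explicit justification of the monotonicity of $t\mapsto t^2/(t+C)$ is a welcome detail that the paper glosses over, but the route is identical.
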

\begin{proof}
    Let $S = \sum_{i\leqslant n}\sum_{a \in A}A_i(a) \geqslant na$. 
    Observe that 
    $$
   S^2 = \bigg(
    \sum_{a \in A}\big( \sum_{i \leqslant n} A_i(a)  \big)
    \bigg)^2
    \leqslant 
    |A|\sum_{a \in A}\big( \sum_{i \leqslant n}A_i(a)\big)^2
    = |A|\sum_{a \in A}\sum_{i, j \leqslant n}A_i(a)A_j(a) 
    = 
    $$
    $$
    = |A|\sum_{i, j \leqslant n}|A_i \cap A_j|
    \leqslant
    |A|\big(S + (n^2 - n)b\big) ,
    $$
    which implies
    $$
    |A| \geqslant \frac{S^2}{S + (n^2 - n)b} \geqslant \frac{(na)^2}{na + (n^2 - n)b} \geqslant 
    \frac{na^2}{a + nb} = \frac{a^2}{b}\frac{1}{1 + \frac{a}{bn}} \geqslant
    \frac{a^2}{b}\bigg(1 - \frac{a}{bn}\bigg).
    $$
\end{proof}

\section{On images of generic polynomials}

The two following results seem to be well-known, yet not explicitly written in the literature (see \cite{chang}, \cite{cilleruelo-garaev-ostafe-shparlinskii} for more information on related questions); we prove them in here for the sake of transparency.

\begin{lemma}\label{lem:xj}
    Let $P \in \F_p[x]$ of degree $d$ be such that $ \frac{P(x) - P(y)}{x - y}$ is absolutely irreducible over $\F_p$, and let $\I$ be an arithmetical progression in $\F_p$, then:
    $$
    |P(\I)| = |\I| + O\big(|\I|^2 p^{-1} + d^2\sqrt{p}(\log{p})^2\big).
    $$
\end{lemma}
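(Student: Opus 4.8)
The plan is to deduce the image bound from a \emph{collision count}, and then to localise a point count on an absolutely irreducible curve to the box $\I\times\I$ by completing to additive characters. First I would reduce to the case $\I=\{0,1,\dots,N-1\}$ with $N:=|\I|$: an arithmetic progression is the image of such an interval under an affine map $x\mapsto ax+b$ with $a\neq 0$, and replacing $P(x)$ by $P(ax+b)$ changes neither $|P(\I)|$, the degree $d$, nor the absolute irreducibility of $\frac{P(x)-P(y)}{x-y}$ (under the same substitution in both variables this rational function is merely multiplied by the unit $a$). Write $Q(x,y):=\frac{P(x)-P(y)}{x-y}\in\F_p[x,y]$; it is absolutely irreducible of degree $d-1$ by hypothesis, and I may assume $d\geqslant 3$, the cases $d\leqslant 2$ being immediate.

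Set $r(v):=\#\{x\in\I:P(x)=v\}$ and $T:=\sum_v r(v)^2=\#\{(x,y)\in\I\times\I:P(x)=P(y)\}$. Since $\sum_v r(v)=N$, Cauchy--Schwarz gives $N^2\leqslant |P(\I)|\cdot T$, so $|P(\I)|\geqslant N^2/T\geqslant N-(T-N)$; combined with the trivial $|P(\I)|\leqslant N$ this yields $|P(\I)|=N+O(T-N)$. Separating the diagonal,
$$
T-N=\#\{(x,y)\in\I\times\I:\ x\neq y,\ Q(x,y)=0\},
$$
so the whole problem reduces to counting points of the plane curve $Q=0$ inside the box $\I\times\I$.

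To count these, I would pass to additive characters. Writing $e_p(z):=e^{2\pi i z/p}$ and $\widehat{\I}(s):=\sum_{u\in\I}e_p(-su)$, one has $\I(x)=p^{-1}\sum_{s}\widehat{\I}(s)e_p(sx)$, whence
$$
\#\{(x,y)\in\I\times\I:Q(x,y)=0\}=\frac{1}{p^2}\sum_{s,t\in\F_p}\widehat{\I}(s)\widehat{\I}(t)\sum_{Q(x,y)=0}e_p(sx+ty).
$$
The frequency $(s,t)=(0,0)$ contributes $\tfrac{N^2}{p^2}\cdot\#\{Q=0\}$, and the Weil bound for the absolutely irreducible curve $Q=0$ of degree $d-1$ (with the $O(d^2)$ points at infinity and singular points corrected for) gives $\#\{Q=0\}=p+O(d^2\sqrt{p})$, so this term equals $\tfrac{N^2}{p}+O\!\big(\tfrac{N^2 d^2}{p^{3/2}}\big)=\tfrac{N^2}{p}+O(d^2\sqrt{p})$ using $N\leqslant p$. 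For each $(s,t)\neq(0,0)$ the linear form $sx+ty$ is non-constant on the curve (an absolutely irreducible curve of degree $d-1\geqslant 2$ is not contained in a line), so Bombieri's bound for exponential sums along a curve yields $\big|\sum_{Q=0}e_p(sx+ty)\big|=O(d^2\sqrt{p})$. Finally, the geometric-sum estimate $|\widehat{\I}(s)|\ll\min(N,\,p/|s|_p)$, where $|s|_p$ is the distance from $s$ to the nearest multiple of $p$, gives $\sum_{s}|\widehat{\I}(s)|\ll p\log p$, so the total weight of the nonzero frequencies is $O(p^2(\log p)^2)$ and their contribution is $O(d^2\sqrt{p}(\log p)^2)$. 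Collecting everything, $T-N=\tfrac{N^2}{p}+O(d^2\sqrt{p}(\log p)^2)$, and substituting into $|P(\I)|=N+O(T-N)$ gives the claim $|P(\I)|=|\I|+O\big(|\I|^2p^{-1}+d^2\sqrt{p}(\log p)^2\big)$.

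The main obstacle is the exponential-sum input: obtaining $\sum_{Q=0}e_p(sx+ty)=O(d^2\sqrt{p})$ \emph{uniformly} in $(s,t)$ requires Bombieri's theorem (or an equivalent Weil-type bound for additive-character sums over a curve), whose hypotheses I must verify, namely absolute irreducibility (given) and that $sx+ty$ is not of the form $h^p-h+c$ on the curve, which holds since it is non-constant of degree $1<p$. The only other delicate point is the honest bookkeeping of singular points and points at infinity of $Q=0$ when invoking Weil, which must be carried out to keep all error terms polynomial in $d$.
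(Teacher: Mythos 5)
Your proposal is correct and follows essentially the same route as the paper: Cauchy--Schwarz reduces the problem to counting collisions $P(x)=P(y)$ in $\I\times\I$, and that count is then localised by Fourier completion, with the Bombieri/Chalk--Smith bound handling the nonzero frequencies and Lang--Weil the complete count --- precisely the content of the paper's Lemmas \ref{lem:qj} and \ref{intersection}, which you reprove inline. (One small quibble: the case $d=2$ is not ``immediate'' --- the statement actually fails there, e.g.\ for $P(x)=x^2$ on a progression symmetric about $0$, since then $\frac{P(x)-P(y)}{x-y}$ is linear and the Chalk--Smith hypothesis of having no linear divisors breaks down; this edge case is harmless because every application has $d\geqslant 3$, but it should not be waved away as trivial.)
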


\begin{lemma}\label{lem:xj_xk}
    Let $P, Q \in \F_p[x]$ of maximal degree $d$ be such that $P(x) - Q(y)$ is absolutely irreducible over $\F_p$, and let $\I$ be an arithmetical progression in $\F_p$, then:
    $$
    |P(\I) \cap Q(\I)| \leqslant |\I|^2 p^{-1} + O(d^2\sqrt{p}(\log{p})^2).
    $$
\end{lemma}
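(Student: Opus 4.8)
The plan is to bound $|P(\I) \cap Q(\I)|$ by the number of solutions of the equation $P(x) = Q(y)$ with both variables ranging over $\I$, and then to count those solutions by the Fourier (``completing the sum'') method combined with Weil-type bounds for the exponential sums along the curve $P(x) = Q(y)$.

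First I would record the elementary reduction
$$
|P(\I) \cap Q(\I)| \leqslant \#\{(x,y)\in \I\times\I : P(x) = Q(y)\}.
$$
Indeed, every $z \in P(\I)\cap Q(\I)$ admits at least one $x\in\I$ with $P(x)=z$ and at least one $y\in\I$ with $Q(y)=z$, producing a pair counted on the right-hand side; distinct values of $z$ give disjoint pairs since $z=P(x)$ is recovered from the pair. Thus it suffices to prove the claimed bound for the number of $\F_p$-points of the plane curve $\mathcal{C}\colon P(x) - Q(y) = 0$ lying in $\I\times\I$.

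Next I would expand the indicator of $\I$ into additive characters. Writing $e_p(t) = e^{2\pi i t/p}$ and $\widehat{\I}(a) = \tfrac1p\sum_{u\in\I}e_p(-au)$, one has $\I(x)=\sum_a \widehat{\I}(a)\,e_p(ax)$, so
$$
\#\{(x,y)\in\I\times\I : P(x)=Q(y)\}
= \sum_{a,b}\widehat{\I}(a)\,\widehat{\I}(b)\, S(a,b),
\qquad
S(a,b) := \sum_{(x,y)\in\mathcal{C}(\F_p)} e_p(ax+by).
$$
The term $(a,b)=(0,0)$ contributes $\tfrac{|\I|^2}{p^2}\,|\mathcal{C}(\F_p)|$; since $\mathcal{C}$ is absolutely irreducible of degree at most $d$, the Weil bound gives $|\mathcal{C}(\F_p)| = p + O(d^2\sqrt{p})$, so this term equals $|\I|^2 p^{-1} + O(d^2\sqrt{p})$, well within the allowed error. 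For the off-diagonal terms I would invoke Bombieri's bound for exponential sums on an absolutely irreducible curve: for $(a,b)\neq(0,0)$ the linear form $ax+by$ is a non-constant function on $\mathcal{C}$ with pole divisor of degree $\leqslant d$, hence (for $p$ large relative to $d$) not of the form $g^p - g$, and therefore $|S(a,b)| \ll d^2\sqrt{p}$, the factor $d^2$ coming from the genus bound $g \ll d^2$. Finally, using the standard estimate $\sum_a|\widehat{\I}(a)| \ll \log p$ for the $L^1$-norm of the Fourier coefficients of an interval, the off-diagonal contribution is at most
$$
\ll d^2\sqrt{p}\,\Big(\sum_a|\widehat{\I}(a)|\Big)^2 \ll d^2\sqrt{p}\,(\log p)^2,
$$
which completes the estimate.

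The main obstacle I anticipate is the clean application of the Weil--Bombieri machinery: one must use the hypothesis that $\mathcal{C}$ is absolutely irreducible, control its genus and the pole divisor of $ax+by$ to obtain the explicit $d^2\sqrt{p}$ dependence, and verify the non-degeneracy condition ruling out $ax+by = g^p-g$ on $\mathcal{C}$, which is precisely where $p$ being large compared to $d$ is needed. By contrast, the combinatorial reduction and the $L^1$ bound on $\widehat{\I}$ are routine, and the whole argument parallels the proof of Lemma~\ref{lem:xj}, with the off-diagonal curve $P(x)-Q(y)=0$ playing the role that the collision curve $\tfrac{P(x)-P(y)}{x-y}=0$ plays there.
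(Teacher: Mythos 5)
Your proposal is correct and follows essentially the same route as the paper: bound the intersection by the number of solutions of $P(x)=Q(y)$ on $\I\times\I$, complete the sum via additive characters, handle the main term with Lang--Weil and the nonzero frequencies with the Bombieri (Chalk--Smith) exponential-sum bound, and finish with the $L^1$ estimate for $\widehat{\I}$. The only cosmetic difference is that the paper packages the point count and the Fourier argument as two separate lemmas (Lemmas \ref{lem:qj} and \ref{intersection}) and states the incomplete-sum bound in the Chalk--Smith form (no linear factor $b_1x+b_2y+c$ divides the curve) rather than via the $g^p-g$ non-degeneracy condition you mention.
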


We postpone their proofs until the end of the section, and formulate some helpful results, which are only to be used in this section.

Given $P, Q \in \F_p[x]$, let us define $\phi(P, Q) \in \F_p[x, y]$ as
$$
    \phi(P, Q)(x, y) :=
        \begin{cases}
            P(x) - Q(y), \text{ if } P \neq Q,\\ \\
            \frac{P(x) - P(y)}{x - y},\ \ \ \ \ \text{ if } P = Q.
        \end{cases}
$$
Let us also define
$$
    J(P, Q) := \#\{(x, y) \in \F_p \times \F_p : \phi(P, Q)(x, y) = 0\}.
$$

\begin{lemma}\label{lem:qj}
Given $P, Q \in \F_p[x]$, suppose that $\phi(P, Q)$ is absolutely irreducible over $\F_p$. Then
$$
J(P, Q) = p + O(d^2 \sqrt{p}),
$$
where $d$ is a degree of $\phi(P, Q)$.
\end{lemma}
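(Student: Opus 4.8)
The plan is to interpret $J(P,Q)$ as the number of $\F_p$-rational points on the affine plane curve $C \subseteq \mathbb{A}^2$ cut out by $\phi(P,Q)(x,y) = 0$, and then to invoke the Hasse--Weil bound. Since $\phi(P,Q)$ is absolutely irreducible of degree $d$, the curve $C$ is geometrically irreducible; passing to its projective closure $\overline{C} \subseteq \mathbb{P}^2$ and then to the normalization $\widetilde{C}$ yields a smooth, projective, geometrically irreducible curve over $\F_p$ of some genus $g$. First I would apply the Hasse--Weil estimate to $\widetilde{C}$, which gives
$$\big| \#\widetilde{C}(\F_p) - (p+1)\big| \leqslant 2g\sqrt{p}.$$

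Next I would control the genus. For a plane curve of degree $d$ the arithmetic genus equals $(d-1)(d-2)/2$, and the geometric genus $g$ is no larger, so $g \leqslant (d-1)(d-2)/2 = O(d^2)$. Consequently the Weil term becomes $2g\sqrt{p} = O(d^2\sqrt{p})$, which is precisely the error term we are aiming for.

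The remaining work is bookkeeping: relating $\#\widetilde{C}(\F_p)$ back to the affine count $J(P,Q)$. There are two sources of discrepancy. The points at infinity of $\overline{C}$ number at most $d$, since the intersection of $\overline{C}$ with the line at infinity is cut out by the degree-$d$ leading form of $\phi(P,Q)$. The normalization map $\widetilde{C} \to \overline{C}$ is an isomorphism away from the singular locus, whose cardinality is $O(d^2)$ (each singular point strictly decreases the geometric genus below the arithmetic genus), and each such point admits only a bounded number of preimages. Hence $\big|\#\widetilde{C}(\F_p) - J(P,Q)\big| = O(d^2)$, and combining this estimate with the two preceding displays yields $J(P,Q) = p + O(d^2\sqrt{p})$.

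I expect the main obstacle to be this last comparison: making the passage between the smooth model $\widetilde{C}$ and the affine curve $C$ genuinely uniform in $d$, so that both the count of points at infinity and the total discrepancy introduced by singularities are provably $O(d^2)$ with no hidden dependence on $p$. A clean way to bypass the more delicate parts of this bookkeeping is to quote an effective form of the Weil bound for absolutely irreducible affine plane curves directly (compare \cite{chang} and \cite{cilleruelo-garaev-ostafe-shparlinskii}), which already packages the genus estimate together with the infinity and singularity corrections into a single inequality of the shape $|N - p| \leqslant (d-1)(d-2)\sqrt{p} + O(d^2)$.
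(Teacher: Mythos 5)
Your proposal is correct and arrives at exactly the tool the paper uses: the paper's entire proof is a one-line citation of the Lang--Weil bound in the effective form due to Aubry and Perret, namely $|X(\F_p) - p| \leqslant (d-1)(d-2)\sqrt{p} + d - 1$ for a geometrically irreducible affine plane curve of degree $d$, which is precisely the ``packaged'' inequality you point to in your final paragraph. Your longer route through Hasse--Weil on the normalization, the genus bound $g \leqslant (d-1)(d-2)/2$, and the $O(d^2)$ bookkeeping at infinity and at the singular locus is essentially a (correct) sketch of how that cited bound is itself proved, so it is sound but unnecessary given the reference.
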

\begin{proof}

We recall the modification of classical Lang-Weil result \cite{lang-weil}, with error term due to Aubry and Perret \cite{aubry-perret}:
\begin{theorem*}[\bf Lang-Weil]
Let $\F_q$ be a finite field. Let $X \subseteq \mathbb{A}_{\F_q}^2$
be a geometrically irreducible
hypersurface of degree $d$. 
Then
$$
|X(\F_q) - q| \leqslant (d-1)(d-2)\sqrt{q} + d - 1.
$$
\end{theorem*}

Since $\phi(P, Q)(x, y)$ is absolutely irreducible over $\F_p$, its set of zeros is (by definition) a geometrically irreducible hypersurface, and therefore the Lang-Weil Theorem is applicable.

This implies the conclusion of the lemma.
\end{proof}

Given a subset $\I \subseteq \F_p$, let us define 
$$
J_{\I}(P, Q) := \#\{(x, y) \in \I \times \I : \phi(P, Q)(x, y) = 0\}.
$$

We need the following lemma, proof of which is already contained in \cite{garaev-hernandez}  but we write it down in full generality for explicity.
\begin{lemma} \label{intersection}
Let $P, Q \in \F_p[x]$ be such that $\phi(P, Q)$ has no linear divisors. Let $\I$ be an arithmetical progression in $\F_p$. Then
$$
J_{\I}(P, Q) = 
\frac{|\I|^2}{p^2}J(P, Q) + O(d^2\sqrt{p}(\log{p})^2),
$$
where $d$ is a degree of $\phi(P, Q)$.
\end{lemma}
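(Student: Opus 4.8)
The plan is to detect membership in the progression $\I$ by additive characters and thereby reduce the whole statement to a Weil-type bound for an exponential sum running over the zero set of $\phi(P,Q)$. First I would normalise $\I$. Writing $\I = \{a + rj : 0 \le j < M\}$ with $r \ne 0$ and $M = |\I|$, the substitution $x \mapsto a + rx$, $y \mapsto a + ry$ is a bijection of $\F_p \times \F_p$ carrying $\I \times \I$ onto $\{0,\dots,M-1\}^2$ and replacing $\phi(P,Q)$ by $\phi(\tilde P, \tilde Q)$ with $\tilde P(x) = P(a+rx)$, $\tilde Q(y) = Q(a+ry)$ (up to a harmless nonzero constant factor when $P = Q$). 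This preserves the degree $d$, the absence of linear divisors, and the values of both $J$ and $J_\I$, so I may assume $\I = \{0,1,\dots,M-1\}$.

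Next, set $e_p(z) := \exp(2\pi i z/p)$ and $\widehat{\I}(u) := \sum_{t=0}^{M-1} e_p(-ut)$, so that Fourier inversion gives $\I(x) = \tfrac1p \sum_{u \in \F_p}\widehat\I(u)\, e_p(ux)$. Letting $Z$ be the zero set of $\phi(P,Q)$, so $|Z| = J(P,Q)$, I expand
$$J_\I(P,Q) = \sum_{(x,y)\in Z}\I(x)\I(y) = \frac1{p^2}\sum_{u,v\in\F_p}\widehat\I(u)\widehat\I(v)\, S(u,v),\qquad S(u,v):=\sum_{(x,y)\in Z}e_p(ux+vy).$$
The term $(u,v)=(0,0)$ contributes $\widehat\I(0)^2 p^{-2}|Z| = M^2 p^{-2}J(P,Q) = |\I|^2 p^{-2} J(P,Q)$, which is precisely the desired main term; everything else must go into the error.

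The heart of the argument is the bound $|S(u,v)| \ll d^2\sqrt p$ for every $(u,v)\ne(0,0)$, for which I would invoke Weil's bound for exponential sums along curves (in the form due to Bombieri). Decomposing $\phi$ into its absolutely irreducible components $Z_i$ of degrees $d_i$ and genera $g_i \le (d_i-1)(d_i-2)/2$, the sum of $e_p(ux+vy)$ over $Z_i(\F_p)$ is $O(d_i^2\sqrt p)$ whenever the linear form $ux+vy$ is nonconstant on $Z_i$; summing over $i$ and discarding the $O(d^2)$ overcounted intersection points gives $|S(u,v)| \ll d^2\sqrt p$. The nonconstancy is exactly where the hypothesis enters: if $ux+vy$ were constant on some $Z_i$, then $Z_i$ would be a line $ux+vy=c$, which either is defined over $\F_p$, contradicting the absence of linear divisors, or is not, in which case it carries at most one $\F_p$-point and is harmless. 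Making this dichotomy precise, together with quoting the exponential-sum estimate with explicit $d^2$ dependence, is the main obstacle.

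Finally I would combine this with the classical $L^1$ estimate for the Fourier coefficients of an interval, $\sum_{u\in\F_p}|\widehat\I(u)| = \sum_{u}\bigl|\tfrac{\sin(\pi Mu/p)}{\sin(\pi u/p)}\bigr| \ll p\log p$, which follows from $|\widehat\I(u)| \ll \min\!\big(M, \|u/p\|^{-1}\big)$. Bounding the off-diagonal then yields
$$\Bigl|J_\I(P,Q) - \tfrac{|\I|^2}{p^2}J(P,Q)\Bigr| \le \frac1{p^2}\Bigl(\sum_u|\widehat\I(u)|\Bigr)^2 \max_{(u,v)\ne(0,0)}|S(u,v)| \ll \frac{1}{p^2}(p\log p)^2\, d^2\sqrt p = d^2\sqrt p(\log p)^2,$$
which is exactly the claimed error term and completes the proof.
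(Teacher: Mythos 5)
Your proposal is correct and follows essentially the same route as the paper: completion via additive characters, extraction of the main term from the zero frequency $(u,v)=(0,0)$, the $L^1$ bound $\sum_u|\widehat{\I}(u)|\ll p\log p$ for an arithmetic progression, and a $\ll d^2\sqrt{p}$ bound on the complete exponential sum over the zero locus of $\phi(P,Q)$. The only divergence is that you sketch a proof of that exponential-sum estimate from Weil's bound for curves and flag it as the main obstacle, whereas the paper simply quotes it as the Bombieri--Chalk--Smith theorem (whose hypothesis of no linear divisor $b_1x+b_2y+c$ is exactly where your ``no line component over $\F_p$'' dichotomy lives), so that step is a known citable result rather than something you need to reprove.
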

\begin{proof}

We recall the statement of Lemma 1 in \cite{garaev-hernandez} (originated in \cite{chalk-smith}):
\begin{theorem*}[\bf Bombieri, Chalk-Smith]
Let $(b_1, b_2) \in \F_p \times \F_p$ be a nonzero vector and let $f (x, y) \in \F_p[x, y]$ be a polynomial of degree $d \geqslant 1$ with the following property: there is no $c \in \F_p$ for which the polynomial $f (x, y)$ is divisible by $b_1x + b_2 y + c$. Then
$$
\bigg|\sum_{\substack{(x, y) \in \F_p \times \F_p: \\ f (x,y)=0}} e^{2\pi i(b_1x+b_2 y)/p}\bigg| \leqslant 2d^2 p^{1/2}.
$$
\end{theorem*}

In what follows, we will need a bit of Discrete Fourier Transform in $\F_p$.
Given function $f : \F_p \rightarrow \C$ define its Discrete Fourier Transform $\hat{f} : \F_p \rightarrow \C$ by
$$
\hat{f} (r) = \sum_{x \in \F_p}f(x) e^{-2\pi i\frac{rx}{p}}.
$$
One can easily verify the Fourier Inverse Transform formula:
$$
f(x) = \frac{1}{p}\sum_{r \in \F_p}\hat{f}(r)e^{2 \pi i \frac{rx}{p}}.
$$
We also need the following well-known result. 
Let $\I$ be a (finite) arithmetic progression in $\F_p$. Then
$$
\sum_{r \in \F_p} |\hat{\I}(r)| \ll p\log{p},
$$
where $\I : \F_p \rightarrow \C$ is interpreted as  characteristic function of the set $\I \subseteq \F_p$.

\medskip

Let us consider $\I$ as a characteristic functions of a set. Then 
$$
J_{\I} (P, Q) = 
\sum_{\substack{(x, y) \in \F_p \times \F_p: \\ \phi(P, Q)(x, y) = 0}} \I(x) \I(y) = 
\sum_{\substack{(x, y) \in \F_p \times \F_p: \\ \phi(P, Q)(x, y) = 0}}\frac{1}{p^2}\sum_{r_1, r_2 \in \F_p}\hat{\I}(r_1)\hat{\I}(r_2)e^{2\pi \frac{(r_1x + r_2y)}{p}} = 
$$
$$
=
\frac{|\I||\I|}{p^2}J(P, Q) + 
\frac{1}{p^2}\sum_{(r_1, r_2) \neq (0, 0)}\hat{\I}(r_1)\hat{\I}(r_2)
\sum_{\substack{(x, y) \in \F_p \times \F_p \\ \phi(P, Q)(x, y) = 0}}e^{2\pi i\frac{(r_1x + r_2y)}{p}}.
$$
Last summand might be bounded as 
$$
\frac{1}{p^2}
\sum_{r_1 \in \F_p}|\hat{\I}(r_1)|
\sum_{r_2 \in \F_p}|\hat{\I}(r_2)|
\max_{(r_1, r_2) \neq 0} 
\bigg| \sum_{\substack{(x, y) \in \F_p \times \F_p: \\ \phi(P, Q)(x, y) = 0}} e^{2\pi i \frac{r_1 x + r_2 y}{p}} \bigg| \ll (\log{p})^2 \sqrt{p}d^2.
$$
This completes the proof.
\end{proof}

Now, let us turn to the proof of the Lemma \ref{lem:xj}:
\begin{proof}
    Clearly, $|P(\I)| \leqslant |\I|$. Now let us obtain a lower bound.
    Cauchy-Bunyakovsky-Schwarz inequality implies:
    $$
    \#\{(x, y) \in \I\times \I: P(x) = P(y)\}|P(\I)| \geqslant |\I|^2,
    $$
    Clearly, 
    $$
    \#\{(x, y) \in \I \times \I: P(x) = P(y)\} 
    = |\I| + J_{\I}(P, P) 
    \leqslant |\I| + |\I|^2p^{-1} + O(d^2\sqrt{p}\log^{2}{p}),
    $$
    where we applied Lemmas \ref{intersection} and \ref{lem:qj}. Deriving the lower bound on $|P(\I)|$ completes the proof.
\end{proof}

Now we prove Lemma \ref{lem:xj_xk}:
\begin{proof}
    By Lemmas \ref{intersection} and \ref{lem:qj}:
    $$
    |P(\I) \cap Q(\I)| \leqslant 
    J_{\I}(P, Q) = \frac{|\I|^2}{p^2}J(P, Q) + O(d^2\sqrt{p}\log^{2}{p}) \leqslant 
    \frac{|\I|^2}{p} + O(d^2\sqrt{p}\log^{2}{p}).
    $$  
\end{proof}

\section{Properties of polynomials $P_j$} {

Let us deduce the following simple lemma:
\begin{lemma}\label{lem:indecomposable}
For given $5 \leqslant j < p$, the polynomial $P_j(x) \in \F_p[x]$ is not equal to $\alpha D_{j, a}(x+b)+c $ for $\alpha, a, b, c \in \F_p$. Moreover, if $j$ is prime, then $P_j(x)$ is indecomposable.  
\end{lemma}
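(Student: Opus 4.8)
The plan is to treat the two assertions separately, since they are of quite different difficulty. The \emph{moreover} part is immediate from the multiplicativity of degree under composition: if $j$ is prime and $P_j = g \circ h$ with $g,h \in \F_p[x]$, then $\deg g \cdot \deg h = \deg P_j = j$, so one of the factors has degree $1$. Hence no decomposition into factors of degree at least $2$ can exist, and $P_j$ is indecomposable. This needs no computation at all.

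For the first assertion I would argue by normalising and then comparing coefficients. Suppose for contradiction that $P_j(x) = \alpha D_{j,a}(x+b)+c$. Since $P_j$ and $D_{j,a}$ are both monic of degree $j$, comparing leading coefficients forces $\alpha = 1$. The explicit formula for $D_{j,a}$ shows it has no $x^{j-1}$ term, so the coefficient of $x^{j-1}$ on the right is $jb$; matching it with the coefficient $\tfrac{j(j+1)}{2}$ of $x^{j-1}$ in $P_j$ (and using $j \not\equiv 0 \bmod p$) pins down $b = \tfrac{j+1}{2}$. After the substitution $x \mapsto x - \tfrac{j+1}{2}$ both sides become \emph{centred}: the left side $\tilde P(x) := P_j(x-\tfrac{j+1}{2})$ has roots $s_i = \tfrac{j+1}{2}-i$ symmetric about $0$, while the right side is $D_{j,a}(x)+c$, which by the displayed formula involves only $x^{j}, x^{j-2}, x^{j-4},\dots$. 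Comparing the coefficients of $x^{j-2}$ then determines $a$: the Dickson coefficient is $-ja$, whereas for $\tilde P$ it is $e_2 = -\tfrac12\sum_i s_i^2 = -\tfrac{j(j^2-1)}{24}$, giving $a = \tfrac{j^2-1}{24}$.

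The contradiction should come from the next coefficient. Using Newton's identities with $p_1 = p_3 = 0$ (the $s_i$ are symmetric), I would compute the coefficient of $x^{j-4}$ in $\tilde P$ as $e_4 = \tfrac{p_2^2}{8} - \tfrac{p_4}{4}$, where $p_2 = \sum_i s_i^2 = \tfrac{j(j^2-1)}{12}$ and $p_4 = \sum_i s_i^4 = \tfrac{j(j^2-1)(3j^2-7)}{240}$; the matching Dickson coefficient is $\tfrac{j(j-3)}{2}a^2$ with the value of $a$ already found. Since $c$ affects only the constant term, these two must agree, but subtracting them leaves $-\tfrac{j(j^2-1)(j-3)(j+3)}{1920}$, which forces $p \mid j(j^2-1)(j-3)(j+3)$.

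The main obstacle is precisely this nonvanishing step. For $5 \le j \le p-4$ all five factors $j, j\pm 1, j\pm 3$ lie in $\{2,\dots,p-1\}$ and $1920 \not\equiv 0$ once $p>5$, so the obstruction is genuinely nonzero and we are done; this covers every range of $j$ used in the applications, where $j$ is far smaller than $p$. The only delicate values are the two residues $j \equiv \pm 3, \pm 1 \bmod p$ just below $p$, for which the leading obstruction collapses and one would have to pass to the coefficient of $x^{j-6}$ (which exists since $j \ge 5$) or inspect them separately. I expect the bookkeeping of the power sums $p_2, p_4$ (and, if needed, $p_6$) to be the only real computation in the whole proof.
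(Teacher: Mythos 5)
Your argument is essentially identical to the paper's, which disposes of the \emph{moreover} part by multiplicativity of degree and proves the first assertion by ``straightforward comparison of the first five leading coefficients''; your write-up simply carries that comparison out, and the computation is correct (the obstruction $-j(j^2-1)(j-3)(j+3)/1920$ at the $x^{j-4}$ coefficient is right, and it is nonzero for $5\leqslant j\leqslant p-4$ once $p>5$). One remark on the endpoint cases you flag: for $j=p-1$ the first assertion is in fact false, since $P_{p-1}(x)=x^{p-1}-1=D_{p-1,0}(x)-1$ over $\F_p$, so no further coefficient comparison can rescue that value; this is a defect of the lemma's statement rather than of your proof, and it is harmless because the paper only ever applies the lemma with $j\leqslant M\ll\sqrt{p}$.
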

\begin{proof}
    The second assertion is clear since $\deg P_j = j$. The first assertion can be proved by straightforward comparison of the first five leading coefficients of these two polynomials.
\end{proof}

For given $k, j$ (possibly equal) we define the polynomial $Q_{kj}(x, y)$, equal to $P_k(x) - P_j(y)$, divided by all possible linear factors. If $k = j$, we denote this polynomial by $Q_j(x, y)$. 
One can show that for $k, j < p-2$
$$
Q_{kj}(x, y)=
   \begin{cases}
        P_k(x) - P_j(y) & \mbox{if } j \neq k, \\ \\
        \frac{P_j(x) - P_j(y)}{x - y} & \mbox{if } k = j,\ j \mbox{ is odd},\\ \\
        \frac{P_j(x) - P_j(y)}{(x - y)(x + y - j - 1)} & \mbox{if } k = j,\ j \mbox { is even}.\\
    \end{cases}
$$

\begin{lemma}\label{lem:abs-irr}
$Q_{kj}(x, y)$ is absolutely irreducible over $\F_p$ for (possibly equal) primes $2 < j, k < p - 2$.   
\end{lemma}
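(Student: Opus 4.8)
The plan is to treat the off-diagonal case $k\neq j$ and the diagonal case $k=j$ by entirely different methods. First note that since $j,k$ are odd primes, the even branch in the definition of $Q_{kj}$ never arises, so I only need that $P_k(x)-P_j(y)$ (for $k\neq j$) and $\frac{P_j(x)-P_j(y)}{x-y}$ (for $k=j$) are absolutely irreducible.

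For the off-diagonal case I would argue through the place at infinity. Assume $k>j$. Homogenising, the projective closure of $P_k(x)-P_j(y)=0$ meets the line at infinity only in $[0:1:0]$, since there the equation reduces to $X^k=0$. In the chart $Y=1$ one has $\prod_{i=1}^{k}(X+iZ)-Z^{k-j}\prod_{i=1}^{j}(1+iZ)$, and I would check that the Newton polygon of this series at the origin has as its lower-left boundary the single segment joining $(k,0)$, the vertex of the monomial $X^k$ (coefficient $1$), to $(0,k-j)$, the vertex of $Z^{k-j}$ (coefficient $-1$, nonzero in any characteristic). Because $k,j$ are distinct primes, $\gcd(k,k-j)=\gcd(k,j)=1$, so this segment is primitive and its edge polynomial is the binomial $X^k-Z^{k-j}$, which is irreducible (here the hypothesis $p>k$ enters, guaranteeing separability). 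Hence the curve has exactly one branch at infinity. Since every geometric component of a projective plane curve meets the line at infinity and a place lies on a unique component, a single place at infinity forces the curve to be absolutely irreducible, settling $k\neq j$.

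For the diagonal case I would convert absolute irreducibility into a property of the monodromy group. Let $G_j$ be the monodromy group of $P_j$, i.e.\ $\mathrm{Gal}$ of $P_j(x)-t$ over $\overline{\F_p}(t)$, a transitive subgroup of the symmetric group on the $j$ roots; the cover is tame because $p>j$. As $\frac{P_j(x)-P_j(y)}{x-y}$ is monic in $x$, Gauss's lemma reduces absolute irreducibility to irreducibility over $\overline{\F_p}(y)$. Specialising $t=P_j(y)$ identifies $\overline{\F_p}(y)$ with the fixed field of a point stabiliser $(G_j)_y$ (as $y$ is itself a root), so the irreducible factors of $P_j(x)-P_j(y)$ over $\overline{\F_p}(y)$ correspond to the orbits of $(G_j)_y$ on the roots. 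The fixed root $x=y$ produces the factor $(x-y)$; consequently $\frac{P_j(x)-P_j(y)}{x-y}$ is absolutely irreducible if and only if $(G_j)_y$ is transitive on the remaining $j-1$ roots, that is, if and only if $G_j$ is doubly transitive.

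It therefore remains to prove that $G_j$ is $2$-transitive, and this is the crux. As $j$ is prime, $G_j$ is automatically primitive, so Burnside's theorem on transitive groups of prime degree gives a dichotomy: either $G_j$ is $2$-transitive, or $G_j$ is a non-$2$-transitive subgroup $\mathbb{Z}/j\rtimes\mathbb{Z}/e$ ($e\mid j-1$) of the affine group $\mathrm{AGL}(1,j)$. In the latter, solvable, case a short Riemann-Hurwitz count—using that infinity is totally ramified (a $j$-cycle) and that every finite inertia generator is an element of $\mathrm{AGL}(1,j)$, so contributes ramification $(j-1)(1-1/d)$—forces $\sum(1-1/d_i)=1$ over the finite branch points, whose only solutions are the cyclic ($e=1$) and dihedral ($e=2$) configurations. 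These occur precisely when $P_j$ is linearly equivalent to $x^j$ or to a Dickson polynomial $D_{j,a}$. The former is impossible since $P_j$ has $j$ distinct roots, and the latter is excluded for $j\geq 5$ by Lemma~\ref{lem:indecomposable}; the single remaining value $j=3$ is handled directly, as the only transitive non-cyclic group on three points is $S_3$, which is $2$-transitive. I expect the main obstacle to be making this last step fully rigorous in positive characteristic, in particular justifying the classification of affine (power or Dickson) monodromy via the tame Riemann-Hurwitz formula; this is exactly where the hypotheses $p>j$ and the non-Dickson conclusion of Lemma~\ref{lem:indecomposable} are essential.
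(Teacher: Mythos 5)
Your proposal reaches the right conclusion but takes a genuinely different route from the paper in both halves, and the two halves deserve different assessments. For $k\neq j$ the paper simply invokes Schmidt's Theorem 1B, noting $\psi(Q_{kj})=k/j$ with $k,j$ distinct primes; your Newton-polygon-at-infinity argument is in effect a self-contained proof of that criterion in this case --- the primitive lower edge from $(k,0)$ to $(0,k-j)$ with $\gcd(k,k-j)=\gcd(k,j)=1$ is exactly the condition Schmidt's theorem encodes --- and, modulo the routine remark that a primitive edge also rules out $F$ being a proper power, it is complete and correct. For $k=j$ the paper cites the Fried--Turnwald theorem (an indecomposable $f$ of degree $4<n<p$ not of the form $\alpha D_{n,a}(x+b)+c$ has $\frac{f(x)-f(y)}{x-y}$ absolutely irreducible) and combines it with Lemma~\ref{lem:indecomposable} plus an explicit check at $j=3$. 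Your monodromy argument --- the equivalence of absolute irreducibility of the quotient with $2$-transitivity of $G_j$, Burnside's dichotomy in prime degree, the tame Riemann--Hurwitz count forcing cyclic or dihedral finite ramification in the affine case, and the identification of those configurations with power and Dickson polynomials --- is precisely the skeleton of the proof of that theorem in the prime-degree case, so you are re-deriving the cited black box rather than using it. The skeleton is sound, and it makes transparent why the non-Dickson conclusion of Lemma~\ref{lem:indecomposable} is exactly the hypothesis needed; but the step you yourself flag, namely that cyclic or dihedral tame ramification over $\overline{\F}_p$ forces $f$ to be linearly equivalent to $x^j$ or to $D_{j,a}$, is the substantive content of Fried's and Turnwald's work and is not actually carried out in your sketch. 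In a finished write-up you should either supply that argument in full or, as the paper does, cite \cite{schur-conjecture} and \cite{turnwald} at that point; as it stands, the diagonal case is a correct reduction to a known theorem presented as if it were a proof of it.
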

\begin{proof}
    First, consider the case $j = k$.
    Recall a Theorem of Fried \cite{schur-conjecture}, with modification by Turnwald \cite{turnwald}. We adopt it for the field $\F_p$ and polynomial $f$ of degree less than $p$:
\begin{theorem*}[\bf Fried-Turnwald]
    Let $f \in \F_p[x]$ be a polynomial of degree $n$, $4 < n < p$. Consider the polynomial
    $$
        \phi(x, y) := \frac{f(x) - f(y)}{x - y}
    $$
    If $f$ is indecomposable, and is not equal $\alpha D_{n,a}(x+b)+c $ for some $\alpha, a, b, c \in \F_p$, then $\phi(x, y)$ is absolutely irreducible.
\end{theorem*}
Application to the polynomial $P_j$ (along with the Lemma \ref{lem:indecomposable}), with the explicit check for $j=3$, gives the result.

\medskip

Now, consider the case $j \neq k$. 
Recall the statement of Theorem 1B in \cite{Schmidt}:
\begin{theorem*}[\bf Schmidt] 
    Let 
    $$
        f(x, y) = g_0 y^d + g_1(x) y^{d-1} + \ldots + g_d(x),
    $$
    be a polynomial from $\K[x, y]$ for some field $\K$, where $g_0$ is a non-zero constant.
    Denote 
    $$
        \psi(f) = \max_{1 \le i \le d}{\frac{\deg g_i}{i}}
    $$
    and suppose $\psi(f) = \frac{m}{d}$ where $m$ is coprime to $d$. Then $f(x, y)$ is absolutely irreducible.
\end{theorem*}
Notice that $\psi(Q_{kj}) = \frac{k}{j}$, and therefore this gives the result.
\end{proof}

Clearly, if $j > k$ are odd primes, Lemma \ref{lem:abs-irr} is applicable, and Lemmas \ref{lem:xj}, \ref{lem:xj_xk} imply the following:
\begin{equation}\label{P_image}
    |P_j(\I)| = |\I| + O\big(|\I|^2 p^{-1} + j^2\sqrt{p}(\log{p})^2\big),
\end{equation}
\begin{equation}\label{PP_image}
    |P_j(\I) \cap P_k(\I)| \leqslant |\I|^2 p^{-1} + O(j^2\sqrt{p}(\log{p})^2),
\end{equation}
where $\I$ is a finite arithmetic progression in $\F_p$.
}

\section{On inequality $|\A(p)\A(p)| \geqslant p + o(p)$}
Now we prove Theorem \ref{thm:product}:
\begin{proof}
Let $\eps_1, \eps_2 > 0$ be dependent on $p$, but separated from zero. Set
    $$
    N := \floor{p^{1 - \eps_1}},\ \ 
    M := \floor{p^{\eps_2}},\ \ 
    \kappa := \log\log{p}/\log{p},\ \ 
    \delta := \min(\eps_1, 1/2 - 2\eps_1 - 2\eps_2 - 2\kappa, \eps_2 - \eps_1 - \kappa) > 0.
    $$

Let $\I$ be the set of odd numbers, not exceeding $2N - M$, and let $Y_j := P_j(\I)$. Clearly, $|\I| = N + O(M)$. Set
    $$
    \A := \{1!, 2!, \dots, (2N)!\} \cup \{(p - 2N)!, \dots, (p-2)!, (p-1)!\} \mod p.
    $$
Clearly, $\A\A \subseteq \A(p)\A(p)$, and from now on we work with $\A\A$.

From Wilson's theorem it follows, that $y! (p - 1 - y)! = (-1)^{y+1} \mod p$. Therefore, $y$ being odd implies $1/(p-1-y)! = y! \mod p$. Let $j \leqslant M$. Then
$$
\A\A \supseteq \{(y+j)!(p - 1 - y)!\ |\ y + j < 2N, y \text{ is odd}\} = 
$$
$$
= \{(y+j)!/y!\ |\ y + j < 2N, y \text{ is odd}\} = \{P_j(y)\ |\ y + j < 2N, y \text{ is odd}\}.
$$
This implies $Y_j \subseteq \A\A$ for all $j \leqslant M$. 

By equations \ref{P_image} and \ref{PP_image}, implied by the Lemmas \ref{lem:xj} and \ref{lem:xj_xk}, we obtain  the following (note, that $\delta \leqslant \eps_1, 1/2 - 2\eps_1 - 2\eps_2 - 2\kappa$ now plays a role):
$$
|Y_j| \geqslant N + O(Np^{-\delta}), \ \ 
|Y_k \cap Y_j| \leqslant N^2/p + O(N^2p^{-1-\delta}),\  k \neq j \text{ odd primes below $M$}.
$$

Set $A := \bigcup_{j}Y_j$ for prime $j \leqslant M$. We reduced the problem to show that $|A| \geqslant p + o(p)$.

Let us apply Lemma \ref{lem:set_intersect} with 
$$
a := N(1 + O(p^{-\delta})),\ 
b :=\frac{N^2}{p}(1 + O(p^{-\delta})),\ 
n \gg M/\log{M} \gg p^{\eps_2 - \kappa}
$$
Notice that by definition of $\delta$, which includes $\delta \leqslant \eps_2 - \eps_1 - \kappa$,  inequality $a/bn \ll p^{-\delta}$ holds, and therefore

$$
|A| \geqslant 
\frac{a^2}{b}\bigg( 1 - \frac{a}{bn}  \bigg) \geqslant 
p(1 + O(p^{-\delta})) = 
p + O(p^{1 - \delta}).
$$

Now our goal is to maximize $\delta$ subject to 
\begin{equation}
    \delta \leqslant 
    \begin{cases}
        \eps_1, \\
        1/2 - 2\eps_1 - 2\eps_2 - 2\kappa, \\
        \eps_2 - \eps_1 - \kappa.
    \end{cases}
\end{equation}
Solving this system, we obtain optimal parameters $\eps_1 := 1/14 - 4\kappa/7$, 
$\eps_2 := 1/7 - \kappa/7$, giving $\delta = 1/14 - 4\kappa/7$. This completes the proof.
\end{proof}

\section{On inequality $|\A_N/\A_N| \geqslant p + o(p)$}
We turn to the proof of Theorem \ref{thm:n-fact}.
\begin{proof}

Let $\I := \{L+1, \dots, L+N-M\}$, and $X_j := P_j(\I), j \leqslant M$,  with parameters $N, M$ depending on the case:

\textit{Case 1:  $N \gg p^{13/14}(\log{p})^{4/7}$.}

For this case one can apply the same argument as in the proof of Theorem \ref{thm:product} to obtain the desired bound. 

\medskip

\textit{Case 2: $p^{13/14}(\log{p})^{4/7} \gg N \gg p^{7/8}\log{p}$.}

Same as in the proof above, we write $N = p^{1 - \eps_1}$ and set $M = \floor{p^{\eps_2}}$ for $\eps_2 > 0$. Observe, that now $\eps_1$ is fixed, but $\eps_2$ is not.

Arguing as before, we obtain $|\A_N/\A_N| \geqslant p + O(p^{1 - \delta})$, where 
\begin{equation}
    \delta \leqslant 
    \begin{cases}
        \eps_1, \\
        1/2 - 2\eps_1 - 2\eps_2 - 2\kappa, \\
        \eps_2 - \eps_1 - \kappa.
    \end{cases}
\end{equation}

Let us set $\eps_2 := 1/6 - \eps_1/3 - \kappa/3$. Observe that $\eps_2 > 0$ since $\eps_1 \leqslant 1/2 - \kappa$. 
From here we obtain, that $\delta = \min(\eps_1, 1/6 - 4\eps_1/3 - 4\kappa/3) = 1/6 - 4\eps_1/3 - 4\kappa/3$ works. Notice, that $\delta > 0$ as long as $\eps_1 < 1/8 - \kappa$. 

This concludes the proof in case $N \gg p^{7/8}\log{p}$.

\medskip

\textit{Case 3: $p^{7/8}\log{p} \gg N \gg p^{4/5}(\log{p})^{8/5}$.}

Let $R$ be a positive integer we choose later. 
Let $M$ be a number with exactly $R$ odd primes below it. Clearly, $M \approx R\log{R}$. 

Clearly, for odd prime $j$ below $M$ we have 
$|X_j| \geqslant N + O(N^2p^{-1} + j^2\sqrt{p}(\log{p})^2) \gg N$ if $M^2 \ll Q$.

Clearly, summing $|X_k \cap X_j|$ for odd primes $k$ below odd prime $j \leqslant M$, we have
$$
\sum_{k < j}|X_k \cap X_j| \ll \frac{N^2}{p}R + RM^2\sqrt{p}(\log{p})^2 \ll N \; \text{ if } R \ll K, R^3(\log{R})^2 \ll Q.
$$

Therefore, setting $R := Q^{1/3}(\log{Q})^{-2/3}$, we obtain 
$$
|\A_N/\A_N| \geqslant 
\underbrace{|X_3 \cup 
X_5 \cup \dots|}_{\text{first $R$ odd primes}} - 
\sum_{k < j, \text{odd primes}}|X_k \cap X_j|
\gg
\underbrace{|X_3| + |X_5| + \dots}_{\text{first $R$ odd primes}} \gg NR,
$$
which completes the proof in this case.

\medskip

\textit{Case 4: $p^{4/5}(\log{p})^{8/5} \gg N \gg p^{1/2}(\log{p})^{2}$.}

 We follow the same line of argumentation, as in the \cite{garaev-hernandez}, but with modified bounds on sets $X_j$ and their intersections. 

From now on we work with all $j$, not just prime ones. Clearly, $J(j), J(k, j) \leqslant pj$, and therefore estimates 
$$
J_N(j), J_N(k, j) \leqslant \frac{N^2}{p^2}pj + O(j^2\sqrt{p}(\log{p})^2)
$$
hold, same as in \cite{garaev-hernandez}.

Same as in the proof of Lemma \ref{lem:xj}, we apply Cauchy-Bunyakovskii-Shwarz inequality:
$$
\#\{(x, y) : P_j(x) = P_j(y), 1 \leqslant x, y \leqslant N-M\}|X_j| \geqslant (N-M)^2,
$$
from where we obtain 
$$
|X_j| 
\geqslant 
\frac{N^2}{N + J_N(j)} 
\geqslant 
N + O\bigg(\frac{N^2j}{p} + j^2\sqrt{p}(\log{p})^2 \bigg)\ \  
\forall\ j \leqslant M.
$$

For $X_k \cap X_j$ we have the bound 
$$|X_k \cap X_j| \leqslant J_N(k, j) \leqslant \frac{N^2}{p}j + O(j^2\sqrt{p}(\log{p})^2) \ \ \forall\ k < j \leqslant M,
$$
same as in \cite{garaev-hernandez}.

Clearly, we have $|X_j| \gg N$ as long as $M \ll K, M^2 \ll Q$.

Clearly, we have $\sum_{k < j}|X_k \cap X_j| \ll N \ll |X_j|$ as long as $M^2 \ll K, M^3 \ll Q$. 

Therefore, similarly to \cite{garaev-hernandez}, we conclude
$$
| \A_N/\A_N|
\geqslant 
\sum_{j \leqslant M}\bigg(  |X_j|   - \sum_{k < j}|X_k \cap X_j|\bigg) \gg
\sum_{j \leqslant M}|X_j| \gg MN,
$$
where we set $M := \min(\sqrt{K}, \sqrt[3]{Q})$, which gives the desired bound.
\end{proof}

\subsection*{Acknowledgments}

A. Grebennikov is supported by Ministry of Science and Higher Education of the Russian Federation, agreement № 075–15–2019–1619. A. Sagdeev is supported in part by
ERC Advanced Grant ‘GeoScape’. He is also a winner of Young Russian Mathematics Contest and would like to thank its sponsors and jury. A. Semchankau was supported by the Foundation for the Advancement of Theoretical Physics and Mathematics “BASIS”. Also, A. Semchankau was partially supported by the University of Bordeaux Pause Program, the ANR project JINVARIANT, and Journal de Théorie des Nombres de Bordeaux. Besides that, he is very happy to thank the Institut de Mathématiques de Bordeaux for their hospitality and excellent working conditions. 
A significant part of this project was done during the research workshop ``Open problems in Combinatorics and Geometry III'', held in Adygea in October 2021.

\noindent{A.~Grebennikov\\
Saint-Petersburg State University, Saint-Petersburg, Russia \\
IMPA, Rio de Janeiro, Brazil}\\
{\tt sagresash@yandex.ru}

\vspace{4mm}

\noindent{A.~Sagdeev\\
Alfréd Rényi Institute of Mathematics, Budapest, Hungary}\\
{\tt sagdeevarsenii@gmail.com}

\vspace{4mm}

\noindent{A.~Semchankau\\
Moscow State University, Moscow, Russia\\
Saint-Petersburg State University, Saint-Petersburg, Russia}\\
{\tt aliaksei.semchankau@gmail.com}

\vspace{4mm}

\noindent{A.~Vasilevskii\\
Carnegie Mellon University, Pittsburgh, US}\\
{\tt avasileu@andrew.cmu.edu}

\end{document}